\theoremstyle{plain}
\newtheorem{theorem}{Theorem}[subsection]
\newtheorem{lemma}[theorem]{Lemma}
\newtheorem{proposition}[theorem]{Proposition}
\newtheorem{corollary}[theorem]{Corollary}
\theoremstyle{definition}
\newtheorem{examples}[theorem]{Examples}
\numberwithin{equation}{subsection}
\newcommand{\diag}{\operatorname{diag}}
\newcommand{\divi}{\operatorname{div}}
\newcommand{\id}{\operatorname{id}}
\newcommand{\modu}{\operatorname{mod}}
\newcommand{\rk}{\operatorname{rk}}
\newcommand{\td}{\operatorname{td}}
\newcommand{\GL}{\operatorname{GL}}
\newcommand{\Hom}{\operatorname{Hom}}
\newcommand{\Pic}{\operatorname{Pic}}
\newcommand{\Proj}{\operatorname{Proj}}
\newcommand{\PGL}{\operatorname{PGL}}
\newcommand{\PSL}{\operatorname{PSL}}
\newcommand{\PSO}{\operatorname{PSO}}
\newcommand{\PSp}{\operatorname{PSp}}
\newcommand{\SL}{\operatorname{SL}}
\newcommand{\SO}{\operatorname{SO}}
\newcommand{\Spec}{\operatorname{Spec}}
\newcommand{\bbA}{\mathbb A}
\newcommand{\bbG}{\mathbb G}
\newcommand{\bbP}{\mathbb P}
\newcommand{\bbQ}{\mathbb Q}
\newcommand{\bbR}{\mathbb R}
\newcommand{\bbZ}{\mathbb Z}
\newcommand{\bG}{\mathbf G}
\newcommand{\bT}{\mathbf T}
\newcommand{\cE}{\mathcal E}
\newcommand{\cL}{\mathcal L}
\newcommand{\cN}{\mathcal N}
\newcommand{\cO}{\mathcal O}
\newcommand{\cS}{\mathcal S}
\newcommand{\cT}{\mathcal T}
\newcommand{\cX}{\mathcal X}
\newcommand{\fa}{\mathfrak a}
\newcommand{\fg}{\mathfrak g}
\newcommand{\fk}{\mathfrak k}
\newcommand{\fl}{\mathfrak l}
\newcommand{\fp}{\mathfrak p}
\newcommand{\ft}{\mathfrak t}
\author{M.~Brion and R.~Joshua}
\address{Michel Brion\\
Universit\'e de Grenoble--I\\
Institut Fourier, BP 74\\ 
38402 Saint-Martin d'H\`eres Cedex\\
France}
\email{Michel.Brion@ujf-grenoble.fr}
\address{Roy Joshua\\ 
Department of Mathematics\\  
Ohio State University\\
231 W 18th Avenue\\
Columbus, OH 43210\\
USA}
\email{joshua@math.ohio-state.edu}
\begin{document}

\begin{abstract}
We describe the equivariant Chow ring of the wonderful
compactification $X$ of a symmetric space of minimal rank, via
restriction to the associated toric variety $Y$. Also, we show that
the restrictions to $Y$ of the tangent bundle $T_X$ and its
logarithmic analogue $S_X$ decompose into a direct sum of line
bundles. This yields closed formulae for the equivariant Chern classes
of $T_X$ and $S_X$, and, in turn, for the Chern classes of reductive
groups considered by Kiritchenko.
\end{abstract}

\title[Equivariant Chow ring of wonderful varieties of minimal rank]
{Equivariant Chow ring and Chern classes of wonderful symmetric
varieties of minimal rank} 

\date{}

\thanks{The second author thanks the IHES, the MPI and the NSA for support}

\maketitle

\setcounter{section}{-1}

\section{Introduction}
\label{sec:introduction}

The purpose of this article is to describe the equivariant
intersection ring and equivariant Chern classes of a class of almost
homogeneous varieties, namely, wonderful symmetric varieties of
minimal rank; these include the wonderful compactifications of 
semi-simple groups of adjoint type.  

\smallskip

The main motivation comes from questions of enumerative geometry on a
spherical homogeneous space $G/K$. As shown by De Concini and Procesi,
these questions find their proper setting in the ring of conditions
$C^*(G/K)$, isomorphic to the direct limit of cohomology rings of
$G$-equivariant compactifications $X$ of $G/K$ (see \cite{DP83,DP85}). 
Recently, the Euler characteristic of any complete intersection of
hyper-surfaces in $G/K$ has been expressed by Kiritchenko (see
\cite{Ki06}), in terms of the Chern classes of the logarithmic tangent
bundle $S_X$ of any ``regular'' compactification $X$. As shown in
\cite{Ki06}, these Chern classes are independent of the choice of 
$X$, and hence yield elements of $C^*(G/K)$; moreover, their
determination may be reduced to the case where $X$ is a 
``wonderful variety''. 

\smallskip

In fact, it is more convenient to work with the rational equivariant
cohomology ring $H^*_G(X)$, from which the ordinary rational
cohomology ring $H^*(X)$ is obtained by killing the action of
generators of the polynomial ring $H^*(BG)$; the Chern classes of
$S_X$ have natural representatives in $H^*_G(X)$, the equivariant
Chern classes. When $X$ is a complete symmetric variety, the ring
$H^*_G(X)$ admits algebraic descriptions by work of Bifet, De Concini,
Littelman, and Procesi (see \cite{BDP90, LP90}).  

\smallskip

Here we consider the case where $X$ is the wonderful compactification
of a symmetric space $G/K$ of minimal rank, that is, $G$ is
semi-simple of adjoint type and $\rk(G/K) = \rk(G) - \rk(K)$; the main
examples are the groups $G = (G \times G)/ \diag(G)$ and the spaces
$\PSL(2n)/\PSp(2n)$. Moreover, we follow a purely algebraic
approach: we work over an arbitrary algebraically closed field, and
replace the equivariant cohomology ring with the equivariant
intersection ring $A^*_G(X)$ of \cite{EG98} (for wonderful varieties
over the complex numbers, both rings are isomorphic over the
rationals).

\smallskip

We show in Theorem \ref{thm:eq} that the pull-back map 
$$
r : A^*_G(X) \to A^*_T(Y)^{W_K}
$$
is an isomorphism over the rationals. Here $T \subset G$ denotes a
maximal torus containing a maximal torus $T_K \subset K$ with Weyl
group $W_K$, and $Y$ denotes the closure in $X$ of 
$T/T_K \subset G/K$, so that $Y$ is the toric variety associated
with the Weyl chambers of the restricted root system of $G/K$.

\smallskip

We also determine the images under $r$ of the equivariant Chern
classes of the tangent bundle $T_X$ and its logarithmic analogue
$S_X$. For this, we show in Theorem \ref{thm:split} 
that the normal bundle $N_{Y/X}$ decomposes (as a $T$-linearized
bundle) into a direct sum of line bundles indexed by certain roots of
$K$; moreover, any such line bundle is the pull-back of
$O_{{\bbP}^1}(1)$ under a certain $T$-equivariant morphism 
$Y \to \bbP^1$. By Proposition \ref{prop:prod}, the product of these
morphisms yields a closed immersion of the toric variety $Y$ into a
product of projective lines, indexed by the restricted roots. 

\smallskip

In the case of regular compactifications of reductive groups, Theorem 
\ref{thm:eq} is due to Littelmann and Procesi for equivariant
cohomology rings (see \cite{LP90}); it has been adapted to equivariant
Chow ring in \cite{Br98}. Here, as in the latter paper, we rely on a  
precise version of the localization theorem in equivariant
intersection theory inspired, in turn, by a similar result in
equivariant cohomology, see \cite{GKM99}. The main ingredient is the
finiteness of $T$-stable points and curves in $X$; this also plays an
essential role in Tchoudjem's description of cohomology groups of line
bundles on wonderful varieties of minimal rank, see \cite{Tc05}.

\smallskip

For wonderful group compactifications, a more precise, ``additive''
description of the equivariant cohomology ring is due to Strickland,
see \cite{St06}; an analogous description of the equivariant 
Grothendieck group has been obtained by Uma in \cite{Um05}. Both 
results may be generalized to our setting of minimal rank. However,
determining generators and relations for the equivariant cohomology 
or Grothendieck ring is still an open question; see \cite{Br04,Um05} 
for some steps in this direction.

\smallskip

Our determination of the equivariant Chern classes seems to be new, 
already in the group case; it yields a closed formula for the image 
under $r$ of the equivariant Todd class of $X$, analogous to the 
well-known formula expressing the Todd class of a toric variety in 
terms of boundary divisors. The toric variety $Y$ associated to Weyl 
chambers is considered in \cite{Pr90,DL94}, where its cohomology is 
described as a graded representation of the Weyl group; its
realization as a general orbit closure in a product of projective 
lines seems to have been unnoticed. 

\smallskip

Our results extend readily to all regular compactifications of 
symmetric spaces of minimal rank. Specifically, the description of 
the equivariant Chow ring holds unchanged, with a similar proof, 
and the determination of equivariant Chern classes follows from the
wonderful case by the results of \cite[Sec.~5]{Ki06}. Another direct 
generalization concerns the spherical (not necessarily symmetric) 
varieties of minimal rank considered in \cite{Tc05}. Indeed, the 
structure of such varieties may be reduced to the symmetric case, 
as shown by Ressayre in \cite{Re04}.

\smallskip

This article is organized as follows. Section 1 gathers preliminary
notions and results on symmetric spaces, their wonderful
compactifications, and the associated toric varieties. In particular,
for a symmetric space $G/K$ of minimal rank, we study the relations
between the root systems and Weyl groups of $G$, $K$, and $G/K$; these
are our main combinatorial tools. In Section 2, we first describe the
$T$-stable points and curves in a wonderful symmetric variety $X$
of minimal rank; then we obtain our main structure result for
$A^*_G(X)$, and some useful complements as well. Section 3 contains
the decompositions of $N_{Y/X}$ and of the restrictions $T_X\vert_Y$,
$S_X \vert_Y$, together with their applications to equivariant Chern
and Todd classes.

\smallskip

Throughout this article, we consider algebraic varieties over an
algebraically closed field $k$ of characteristic $\neq 2$; by a point
of such a variety, we mean a closed point. As general references, we
use \cite{Ha77} for algebraic geometry, and \cite{Sp98} for algebraic
groups.

\section{Preliminaries}

\subsection{The toric variety associated with Weyl chambers}
\label{subsec:tv}

Let $\Phi$ be a root system in a real vector space $V$ (we follow the
conventions of \cite{Bo81} for root systems; in particular, $\Phi$ is
finite but not necessarily reduced). Let $W$ be the Weyl group, $Q$
the root lattice in $V$, and $Q^{\vee}$ the dual lattice (the
co-weight lattice) in the dual vector space $V^*$. The Weyl chambers
form a subdivision of $V^*$ into rational polyhedral convex cones; let
$\Sigma$ be the fan of $V^*$ consisting of all Weyl chambers and their
faces. The pair $(Q^{\vee},\Sigma)$ corresponds to a toric variety
$$
Y = Y(\Phi)
$$ 
equipped with an action of $W$ via its action on $Q^{\vee}$ which
permutes the Weyl chambers. The group $W$ acts compatibly on the
associated torus
$$
T := \Hom(Q,\bbG_m) = Q^{\vee} \otimes_{\bbZ} \bbG_m.
$$
Thus, $Y$ is equipped with an action of the semi-direct product
$T \, W$. Note that the character group $\cX(T)$ is identified with
$Q$; in particular, we may regard each root $\alpha$ as a
homomorphism 
$$
\alpha: T \to \bbG_m.
$$ 

The choice of a basis of $\Phi$, 
$$
\Delta = \{\alpha_1,\ldots,\alpha_r\},
$$ 
defines a positive Weyl chamber, the dual cone to $\Delta$. Let 
$Y_0 \subset Y$ be the corresponding $T$-stable open affine subset. 
Then $Y_0$ is isomorphic to the affine space $\bbA^r$ on which $T$ acts 
linearly with weights $-\alpha_1,\ldots,-\alpha_r$. Moreover,
the translates $w \cdot Y_0$, where $w \in W$, form an open covering
of $Y$.

In particular, the variety $Y$ is nonsingular. Also, $Y$ is
projective, as $\Sigma$ is the normal fan to the convex polytope with
vertices $w \cdot v$ ($w \in W$), where $v$ is any prescribed regular
element of $V$. The following result yields an explicit projective
embedding of~$Y$:

\begin{proposition}\label{prop:prod}
{\rm (i)} For any $\alpha \in \Phi$, the morphism 
$\alpha : T \to \bbG_m$ extends to a morphism
$$
f_{\alpha} : Y \to \bbP^1.
$$
Moreover, $f_{\alpha}$ and $f_{-\alpha}$ differ by the inverse map
$\bbP^1 \to \bbP^1$, $z \mapsto z^{-1}$.
\smallskip

\noindent
{\rm (ii)} The product morphism
$$
f := \prod_{\alpha \in \Phi} f_{\alpha} : Y \to 
\prod_{\alpha \in \Phi} \bbP^1
$$
is a closed immersion. It is equivariant under $T \, W$,
where $T$ acts on the right-hand side via its action on each factor
$\bbP^1_{\alpha}$ through the character $\alpha$, and $W$ acts via its
natural action on the set $\Phi$ of indices.

\smallskip

\noindent
{\rm (iii)} Conversely, the $T$-orbit closure of any point of 
$\prod_{\alpha \in \Phi} (\bbP^1 \setminus \{0,\infty \})$ 
is isomorphic to $Y$. 

\smallskip

\noindent
{\rm (iv)} Any non-constant morphism $F : Y \to C$, where $C$ is an
irreducible curve, factors through $f_\alpha : Y \to \bbP^1$ where
$\alpha$ is an indivisible root, unique up to sign. Moreover,
\begin{equation}\label{eqn:fxp}
(f_\alpha)_* \cO_Y = \cO_{\bbP^1}.
\end{equation}

\end{proposition}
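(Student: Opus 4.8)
The plan is to treat the four assertions in turn, using throughout the affine cover $\{w\cdot Y_0\}_{w\in W}$ and the fact that $Y_0\cong\bbA^r$ carries, as a system of affine coordinates, the restrictions of the simple characters $\alpha_1,\dots,\alpha_r$. For (i) the only point is that each root $\alpha$ has constant sign on every Weyl chamber, since the root hyperplanes are exactly the chamber walls; hence $\langle\alpha,-\rangle\colon Q^\vee\to\bbZ$ carries each cone of $\Sigma$ into one of the two rays of the fan of $\bbP^1$ and so defines a toric morphism $f_\alpha$ extending $\alpha$. Replacing $\alpha$ by $-\alpha$ negates this form, which is precisely postcomposition with the inversion of $\bbP^1$.

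For (ii), note first that $f$ is $T\,W$-equivariant: $T$-equivariance holds on the dense torus and extends by density, while the identity $f_\alpha\circ w=f_{w^{-1}\alpha}$ (valid on $T$, hence on $Y$) gives $W$-equivariance. As $Y$ is projective and the target separated, $f$ is proper, so it suffices to prove that $f$ is an immersion, properness then upgrading it to a closed immersion. I would argue on $Y_0$. Let $U_0$ be the product of the lines $\bbP^1\setminus\{\infty\}$ (for $\alpha\in\Phi^+$) and $\bbP^1\setminus\{0\}$ (for $\alpha\in\Phi^-$); this is an affine space, and the components $f_{\alpha_1},\dots,f_{\alpha_r}$ restrict on $Y_0$ to affine coordinates, so the comorphism $\cO(U_0)\to\cO(Y_0)$ is surjective and $f|_{Y_0}$ is a closed immersion into $U_0$. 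The key verification is that $f^{-1}(U_0)=Y_0$: since $f_{-\beta}$ and $f_\beta$ differ by inversion, one has $f(y)\in U_0$ iff $f_\beta(y)\neq\infty$ for all $\beta\in\Phi^+$, and $f_\beta^{-1}(\infty)=\bigcup_{\langle\beta,v_\rho\rangle<0}D_\rho$; thus $f^{-1}(U_0)$ is $Y$ minus those $D_\rho$ on which some positive root is negative. A ray $v_\rho$ has all positive roots nonnegative on it exactly when it lies in $\overline{C_0}$, i.e. is a ray of $C_0$, so the complement is precisely $Y_0$. By equivariance the same holds over each $w\cdot U_0$, and these cover $f(Y)$; hence $f$ is an immersion, proving (ii). For (iii), observe that $p_0=(1,\dots,1)=f(e_T)$ has orbit closure $\overline{T\cdot p_0}=f(Y)\cong Y$; for arbitrary $p\in\prod_\alpha(\bbP^1\setminus\{0,\infty\})$, translation by $p^{-1}$ in the big torus $\prod_\alpha\bbG_m$ is an automorphism of $\prod_\alpha\bbP^1$ carrying the coset $\overline{T\cdot p}$ onto $\overline{T\cdot p_0}$, which yields the isomorphism.

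For (iv) I would first pass to the Stein factorization $F=h\circ g$ with $g\colon Y\to B$ having connected fibres and $g_*\cO_Y=\cO_B$, $B$ a smooth projective curve, and $h\colon B\to C$ finite. Since $Y$ is rational we have $H^0(Y,\Omega^1_Y)=0$, so $g^*H^0(B,\Omega^1_B)=0$ forces $B\cong\bbP^1$. The crucial step is that, by the projection formula, $g_*\bigl(g^*\cO_{\bbP^1}(1)\bigr)=\cO_{\bbP^1}(1)$, whence $h^0(Y,L)=2$ for $L:=g^*\cO(1)$: the defining pencil of $g$ is the \emph{complete} system $|L|$. As $L$ is $T$-linearizable ($Y$ being toric), $H^0(Y,L)$ is a two-dimensional $T$-module and the canonical morphism $g\colon Y\to\bbP(H^0(Y,L)^*)=\bbP^1$ is $T$-equivariant for the linear action on $\bbP^1$. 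Non-constancy makes the two weights distinct, so after a translation $g$ restricts on $T$ to a nontrivial character $\chi\in Q$, and by uniqueness of extensions $g=f_\chi$. Existence of the morphism $f_\chi$ forces, as in (i), $\chi^\perp=\alpha^\perp$, i.e. $\chi$ proportional to a root, and connectedness of the fibres forces $\chi$ primitive, hence equal to the indivisible root $\alpha$ on that line, unique up to sign; thus $F=h\circ g$ factors through $f_\alpha$.

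Finally, $f_\alpha$ is induced by $\langle\alpha,-\rangle\colon Q^\vee\to\bbZ$; as an indivisible root is a primitive vector of $Q$, this homomorphism is surjective, and a surjective toric morphism of normal varieties satisfies $(f_\alpha)_*\cO_Y=\cO_{\bbP^1}$, giving the displayed identity. The main obstacle is the non-equivariance of a general $F$ in (iv); the device that resolves it is precisely that $g_*\cO_Y=\cO_B$ makes the pencil complete, so $T$-linearizability of $L$ automatically renders $g$ equivariant and reduces the classification of fibrations over curves to the combinatorial statement already used in (i).
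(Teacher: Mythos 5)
Your proof is correct and follows essentially the same route as the paper's: toric morphisms from the constant sign of each root on the Weyl chambers, coordinates on $Y_0$ plus $W$-translation for the closed immersion, torus translation for (iii), and Stein factorization plus $T$-linearization of the pulled-back ample sheaf to force equivariance in (iv) (the paper uses the full section ring $\Proj R(Y,F^*\cL)$ where you use the complete linear system $|L|$ with $h^0=2$, but this is the same idea). The only caveat is your closing principle --- ``a surjective toric morphism of normal varieties satisfies $(f_\alpha)_*\cO_Y=\cO_{\bbP^1}$'' is false as literally stated (consider $z\mapsto z^2$ on $\bbG_m$); the correct hypothesis is surjectivity of the induced homomorphism of cocharacter lattices, i.e.\ indivisibility of $\alpha$, which is exactly what you verified in the preceding clause, so the argument stands.
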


\begin{proof}
(i) Since $\alpha$ has a constant sign on each Weyl chamber, it
defines a morphism of fans from $\Sigma$ to the fan of $\bbP^1$,
consisting of two opposite half-lines and the origin. This implies
our statement.

(ii) The equivariance property of $f$ is readily verified. Moreover,
the product map
$$
\prod_{i=1}^r f_{\alpha_i} : Y \to (\bbP^1)^r
$$
restricts to an isomorphism 
$Y_0 \to (\bbP^1 \setminus \{\infty\})^r$, 
since each $f_{\alpha_i}$ restricts to the $i$-th coordinate function
on $Y_0 \cong \bbA^r$. Since $Y = W \cdot Y_0$, it follows that $f$ is
a closed immersion.

(iii) follows from (ii) by using the action of tuples
$(t_\alpha)_{\alpha \in  \Phi}$ of non-zero scalars, via component-wise
multiplication. 

(iv) Taking the Stein factorization, we may assume that 
$F_* \cO_Y = \cO_C$. Then $C$ is normal, and hence nonsingular.
Moreover, the action of $T$ on $Y$ descends to a unique action on $C$ 
such that $F$ is equivariant (indeed, $F$ equals the canonical
morphism $Y \to \Proj R(Y,F^* \cL)$, where $\cL$ is any ample
invertible sheaf on $C$, and $R(Y, F^*\cL)$ denotes the section ring
$\bigoplus_n \Gamma(Y,F^*\cL^n)$. Furthermore, $F^*\cL$ admits a 
$T$-linearization, and hence $T$ acts on $R(Y,\cL)$). It follows that
$C \cong \bbP^1$ where $T$ acts through a character $\chi$, uniquely
defined up to sign. Thus, $F$ induces a morphism from the fan of $Y$ to 
the fan of $\bbP^1$; this morphism is given by the linear map 
$\chi : V^* \to \bbR$. In other words, $\chi$ has a constant sign on
each Weyl chamber. Thus, $\chi$ is an integral multiple of an
indivisible root $\alpha$, uniquely defined up to sign. Since $F$ has
connected fibers, then $\chi = \pm \alpha$. 

Conversely, if $\alpha$ is an indivisible root, then the fibers of
the morphism $\alpha :  T \to \bbG_m$ are irreducible. This implies
(\ref{eqn:fxp}). 
\end{proof}

Next, for later use, we determine the divisor of each $f_\alpha$
regarded as a rational function on $Y$. Since $f_\alpha$ is a
$T$-eigenvector, its divisor is a linear combination of the $T$-stable
prime divisors $Y_1,\ldots,Y_m$ of the toric variety $Y$, also called
its \emph{boundary divisors}. Recall that $Y_1,\ldots,Y_m$ correspond
bijectively to the rays of the Weyl chambers, i.e., to the
$W$-translates of the fundamental co-weights
$\omega_1^{\vee},\ldots,\omega_r^{\vee}$ 
(which form the dual basis of the basis of simple roots). The isotropy
group of each $\omega_i^{\vee}$ in $W$ is the maximal parabolic
subgroup $W_i$ generated by the reflections associated with the
simple roots $\alpha_j$, $j \neq i$. Thus, the orbit 
$W \omega_i^{\vee} \cong W/W_i$ is in bijection with the subset
$$
W^i := \{ w\in W ~\vert~ w\alpha_j \in \Phi^+ \text{ for all }j \neq i \}
$$
of minimal representatives for the coset space $W/W_i$. So the
boundary divisors are indexed by the set
$$
E(\Phi) := \{(i,w) ~\vert~ 1 \leq i \leq r, ~ w \in W^i\}
\cong \{ w\omega_i^{\vee} ~\vert~ 1 \leq i \leq r, ~ w \in W\};
$$
we will denote these divisors by $Y_{i,w}$. Furthermore, we have 
\begin{equation}\label{eqn:div}
\divi(f_\alpha) = \sum_{(i,w) \in E(\Phi)} 
\langle \alpha, w \omega_i^{\vee} \rangle \;Y_{i,w}
\end{equation}
by Proposition \ref{prop:prod} and the classical formula
for the divisor of a character in a toric variety (see e.g. 
\cite[Prop.~2.1]{Od88}). Also, note that 
$\langle \alpha, w \omega_i^{\vee} \rangle$
is the $i$-th coordinate of $w^{-1}\alpha$ in the basis of simple
roots.

\subsection{Symmetric spaces}
\label{subsec:ss}

Let $G$ be a connected reductive algebraic group, and 
$$
\theta: G \to G
$$ 
an involutive automorphism. Denote by 
$$
K = G^{\theta} \subset G
$$ 
the subgroup of fixed points; then the homogeneous space $G/K$ is a
\emph{symmetric space}. 

We now collect some results on the structure of symmetric spaces,
referring to \cite{Ri82, Sp85} for details and proofs.
The identity component $K^0$ is reductive, and non-trivial unless $G$
is a $\theta$-\emph{split torus}, i.e., a torus where $\theta$ acts
via the inverse map $g \mapsto g^{-1}$.
 
A parabolic subgroup $P \subseteq G$ is said to be
$\theta$-\emph{split} if the parabolic subgroup $\theta(P)$ is
opposite to $P$. The minimal $\theta$-split parabolic subgroups are
all conjugate by elements of $K^0$; we choose such a subgroup $P$ and
put
$$
L := P \cap \theta(P),
$$
a $\theta$-stable Levi subgroup of $P$. The intersection $L \cap K$
contains the derived subgroup $[L,L]$; thus, every maximal torus of
$L$ is $\theta$-stable. We choose such a torus $T$, so that
\begin{equation}\label{eqn:dec}
T = T^{\theta} T^{-\theta} \quad \text{and} \quad
T^{\theta} \cap T^{-\theta} \text{ is finite.}
\end{equation}
Moreover, the identity component 
$$
A := T^{-\theta,0}
$$ 
is a maximal $\theta$-split subtorus of $G$. All such subtori are
conjugate in $K^0$; their common dimension is the \emph{rank} of the
symmetric space $G/K$, denoted by $\rk(G/K)$. Moreover,
$$
C_G(A) = L = (L \cap K) A
$$
(where $C_G(A)$ denotes the centralizer of $A$ in $G$), and 
$(L \cap K) \cap A = A \cap K$ consists of all elements of order $2$ of
$A$.  

The product $P K^0 \subseteq G$ is open, and equals $PK$; thus, $PK/K$
is an open subset of $G/K$, isomorphic to $P/P\cap K = P/L \cap K$. 
Let $P_u$ be the unipotent radical of $P$, so that 
$P = P_u L$. Then the map
\begin{equation}\label{eqn:id}
\iota : P_u \times A/A \cap K \to PK/K, \quad (g,x) \mapsto g \cdot x
\end{equation}
is an isomorphism.

The character group $\cX(A/A \cap K)$ may be identified with the
subgroup $2 \cX(A)\subset \cX(A)$. On the other hand, 
$A/A \cap K \cong T/T \cap K$ and hence 
$\cX(A/A \cap K)$ may be identified with the subgroup of $\cX(T)$
consisting of those characters that vanish on $T \cap K$, i.e., 
\begin{equation}\label{eqn:char}
\cX(A/A \cap K) = \{ \chi - \theta(\chi) ~\vert~ \chi \in \cX(T)\}.
\end{equation}
Here $\theta$ acts on $\cX(T)$ via its action on $T$. 

Denote by 
$$
\Phi_G \subset \cX(T)
$$ 
the root system of $(G,T)$, with Weyl group 
$$
W_G = N_G(T)/T.
$$ 
Choose a basis $\Delta_G$ consisting of roots of $P$. Let  
$\Phi^+_G \subset \Phi_G$ be the corresponding subset of
positive roots and let $\Delta_L \subset \Delta_G$ be the subset of
simple roots of $L$. The natural action of the involution $\theta$ on
$\Phi$ fixes point-wise the subroot system $\Phi_L$. Moreover,
$\theta$  exchanges the subsets $\Phi^+_G \setminus \Phi^+_L$ and
$\Phi^-_G \setminus \Phi^-_L$ (the sets of roots of $P_u$ and of
$\theta(P_u) = \theta(P)_u$).

Also, denote by 
$$
p : \cX(T) \to \cX(A)
$$ 
the restriction map from the character group of $T$ to that of
$A$. Then $p(\Phi_G) \setminus \{0\}$ is a (possibly non-reduced) root 
system called the \emph{restricted root system}, that we denote by
$\Phi_{G/K}$. Moreover, 
$$
\Delta_{G/K} := p(\Delta_G \setminus \Delta_L)
$$ 
is a basis of $\Phi_{G/K}$. The corresponding Weyl group is 
\begin{equation}\label{eqn:wg}
W_{G/K} = N_G(A)/C_G(A) \cong N_{K^0}(A)/C_{K^0}(A).
\end{equation}
Also, $W_{G/K} \cong N_W(A)/C_W(A)$, and $N_W(A) = W_G^{\theta}$
whereas $C_W(A) = W_L$. This yields an exact sequence
\begin{equation}\label{eqn:ext}
1 \to W_L \to W_G^{\theta} \to W_{G/K} \to 1.
\end{equation}

\subsection{The wonderful compactification of an adjoint symmetric space} 
\label{subsec:wc}

We keep the notation and assumptions of Subsec.~\ref{subsec:ss} and we
assume, in addition, that $G$ is semi-simple and adjoint;
equivalently, $\Delta_G$ is a basis of $\cX(T)$. Then the symmetric
space $G/K$ is said to be \emph{adjoint} as well.

By \cite{DP83, DS99}, $G/K$ admits a canonical compactification: the
\emph{wonderful compactification} $X$, which satisfies the following
properties.

\noindent
(i) $X$ is a nonsingular projective variety.

\noindent
(ii) $G$ acts on $X$ with an open orbit isomorphic to $G/K$.

\noindent
(iii) The complement of the open orbit is the union of 
$r = \rk(G/K)$ nonsingular prime divisors $X_1, \ldots, X_r$ with
normal crossings. 

\noindent
(iv) The $G$-orbit closures in $X$ are exactly the partial
intersections 
$$
X_I := \bigcap_{i \in I} X_i
$$
where $I$ runs over the subsets of $\{1,\ldots,r\}$.

\noindent
(v) The unique closed orbit, $X_1 \cap \cdots \cap X_r$,  
is isomorphic to $G/P \cong G/\theta(P)$.

We say that $X$ is a \emph{wonderful symmetric variety}
with \emph{boundary divisors} $X_1,\ldots,X_r$. By (iii) and (iv),
each orbit closure $X_I$ is nonsingular.

Let $Y$ be the closure in $X$ of the subset 
$$
A/A \cap K \cong AK/K = LK/K \subseteq G/K.
$$ 
Then $Y$ is stable under the action of the subgroup 
$L N_K(A) \subseteq G$. Since $L \cap N_K(A) = L \cap K = C_K(A)$,
and $N_K(A)/C_K(A) \cong W_{G/K}$ by (\ref{eqn:wg}), we obtain an
exact sequence
$$
1 \to L \to L N_K(A) \to W_{G/K} \to 1.
$$
Moreover, since $Y$ is fixed point-wise by $L \cap K$, the action of
$L N_K(A)$ factors through an action of the semi-direct product 
$$
(L/L \cap K) \, W_{G/K} \cong (A/A \cap K) \, W_{G/K}.
$$  

The adjointness of $G$ and (\ref{eqn:char}) imply that 
$\cX(A/A \cap K)$ is the restricted root lattice, with basis
$$
\Delta_{G/K} = \{ \alpha - \theta(\alpha) ~\vert~ 
\alpha \in \Delta_G \setminus \Delta_L \}.
$$
Moreover, $Y$ is the toric variety associated with the Weyl
chambers of the restricted root system $\Phi_{G/K}$ as in
Subsec.~\ref{subsec:tv}. This defines the open affine toric 
subvariety $Y_0 \subset Y$ associated with the positive Weyl
chamber dual to $\Delta_{G/K}$. Note that
\begin{equation}\label{eqn:ywy}
Y = W_{G/K} \cdot Y_0.
\end{equation}

Also, recall the local structure of the wonderful symmetric variety 
$X$: the subset 
$$
X_0 := P \cdot Y_0 = P_u \cdot Y_0
$$ 
is open in $X$, and the map
\begin{equation}\label{eqn:ls}
\iota: P_u \times Y_0 \to X_0, \quad (g,x) \mapsto g \cdot x
\end{equation}
is a $P$-equivariant isomorphism. Moreover, any $G$-orbit in $X$ meets
$X_0$ along a unique orbit of $P$, and meets transversally $Y_0$
along a unique orbit of $A/A\cap K$.

It follows that the $G$-orbit structure of $X$ is determined by that
of the associated toric variety $Y$: any $G$-orbit in $X$ meets
transversally $Y$ along a disjoint union of orbit closures of 
$A/A \cap K$, permuted transitively by $W_{G/K}$. As another
consequence, $X_0 \cap G/K = PK/K$ and $Y_0 \cap G/K = AK/K$, so
that $\iota$ restricts to the isomorphism (\ref{eqn:id}).

Finally, the closed $G$-orbit $X_1 \cap \cdots \cap X_r$ 
meets $Y_0$ transversally at a unique point $z$, the $T$-fixed point
in $Y_0$. The isotropy group $G_z$ equals $\theta(P)$, and the normal
space to $G \cdot z$ at $z$ is identified with the tangent space to
$Y$ at that point. Hence the weights of $T$ in the tangent space to
$X$ at $z$ are the positive roots 
$\alpha \in \Phi^+_G \setminus \Phi^+_L$ 
(the contribution of the tangent space to $G \cdot z$), and the simple 
restricted roots $\gamma = \alpha - \theta(\alpha)$, where 
$\alpha \in \Delta_G \setminus \Delta_L$ 
(the contribution of the tangent space to $Y$).

\subsection{Symmetric spaces of minimal rank}
\label{subsec:ssm}

We return to the setting of Subsect.~\ref{subsec:ss}. In particular,
we consider a connected reductive group $G$ equipped with an
involutive automorphism $\theta$, and the fixed point subgroup 
$K = G^{\theta}$. 

Let $T$ be any $\theta$-stable maximal torus of $G$. Then
(\ref{eqn:dec}) implies that 
$$
\rk(G) \geq \rk(K) + \rk(G/K)
$$ 
with equality if and only if the identity component $T^{\theta,0}$ is
a maximal torus of $K^0$, and $T^{-\theta,0}$ is a maximal
$\theta$-split subtorus. We then say that the symmetric space $G/K$ is
\emph{of minimal rank}; equivalently, all $\theta$-stable maximal tori
of $G$ are conjugate in $K^0$. 

We refer to \cite[Subsec.~3.2]{Br04} for the proof of the following
auxiliary result, where we put
$$
T_K := T^{\theta,0} = (T \cap K)^0.
$$ 

\begin{lemma}\label{lem:ro}
{\rm (i)} The roots of $(K^0, T_K)$ are exactly the restrictions to 
$T_K$ of the roots of $(G,T)$.

\smallskip

\noindent
{\rm (ii)} The Weyl group of $(K^0,T_K)$ may be identified with
$W^{\theta}_G$.
\end{lemma}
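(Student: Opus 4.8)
The plan is to derive both parts from a single structural consequence of the minimal rank hypothesis, namely that $C_G(T_K) = T$, and then to read off the roots and the Weyl group from the $\theta$-stable root space decomposition.

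First I would establish this key fact together with part (i). Since $\theta$ fixes $T_K$ pointwise, conjugation by elements of $T_K$ commutes with $\theta$, so $\fk = \fg^{\theta}$ and $\fp = \fg^{-\theta}$ are $T_K$-submodules and each $T_K$-weight space of $\fg = \ft \oplus \bigoplus_{\alpha \in \Phi_G} \fg_\alpha$ is $\theta$-stable (with $\theta(\fg_\alpha) = \fg_{\theta\alpha}$). Using $T = T_K A$ and $\theta|_A : a \mapsto a^{-1}$, one checks that a root $\alpha$ vanishes on $T_K$ exactly when $\theta\alpha = -\alpha$; for such an $\alpha$ and $0 \neq X \in \fg_\alpha$ the vector $X + \theta(X) \in \fg_\alpha \oplus \fg_{-\alpha}$ is a nonzero $\theta$-fixed element of $T_K$-weight $0$ lying outside $\operatorname{Lie}(T_K)$, contradicting the maximality of $T_K$ in $K^0$ that characterizes minimal rank. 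Hence no root vanishes on $T_K$, which is to say $C_G(T_K) = T$. It remains to match weights: for a $\theta$-fixed root $\alpha$ (necessarily in $\Phi_L$) the inclusion $[L,L] \subseteq K$ forces $\fg_\alpha \subseteq \fk$, while for a genuine pair $\{\alpha, \theta\alpha\}$ the line $k\,(X + \theta(X))$ lies in $\fk$; in either case the contribution to $\fk$ has nonzero $T_K$-weight $\alpha|_{T_K}$. Thus the set of nonzero $T_K$-weights occurring in $\fk$ is exactly $\{\alpha|_{T_K} : \alpha \in \Phi_G\}$, and since $T_K$ is a maximal torus of $K^0$ this set is by definition the root system of $(K^0,T_K)$, proving (i).

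For part (ii) I would first use $C_G(T_K) = T$ to identify normalizers. Any $n \in N_G(T) \cap K^0$ satisfies $\theta(n) = n$, so conjugation by $n$ commutes with $\theta$ and preserves $T^{\theta,0} = T_K$; conversely, if $n \in K^0$ normalizes $T_K$ then it normalizes $C_G(T_K) = T$. Hence $N_{K^0}(T_K) = N_G(T) \cap K^0$, and the inclusion into $N_G(T)$ induces a homomorphism $W_{K^0} = N_{K^0}(T_K)/T_K \to W_G$ whose image lands in $W_G^{\theta}$ (elements of $K$ commute with $\theta$). Its kernel is $(T \cap K^0)/T_K$, and $T \cap K^0 \subseteq C_{K^0}(T_K) = T_K$ by maximality of $T_K$, so the kernel is trivial and we obtain an injection $W_{K^0} \hookrightarrow W_G^{\theta}$.

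The main obstacle is surjectivity, i.e. realizing every $w \in W_G^{\theta}$ by an element of $K^0$. Representing $w$ by $n \in N_G(T)$, the $\theta$-invariance of $w$ forces $n^{-1}\theta(n) \in T^{-\theta}$, and replacing $n$ by $nt$ requires solving $t\,\theta(t)^{-1} = n^{-1}\theta(n)$; since the homomorphism $t \mapsto t\,\theta(t)^{-1}$ has image exactly $A = (T^{-\theta})^0$, the obstruction lives in the finite component group $\pi_0(T^{-\theta})$. I would remove it either by a Lang-type connectedness argument on the $\theta$-stable torus $T$, or, more structurally, by comparing the exact sequence $\eqref{eqn:ext}$, $1 \to W_L \to W_G^{\theta} \to W_{G/K} \to 1$, with the analogous sequence for $W_{K^0}$ arising from $\Phi_K$, using $[L,L] \subseteq K$ to identify $W_L$ with the Weyl group of $L \cap K$: a compatible injection between two extensions of $W_{G/K}$ by the same group $W_L$ is forced to be an isomorphism. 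This component-group/representative issue is the only genuinely delicate point; everything else is bookkeeping with the $\theta$-stable root decomposition.
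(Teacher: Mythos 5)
The paper does not actually prove this lemma: it refers to \cite[Subsec.~3.2]{Br04}, so there is no internal proof to compare against, and your self-contained argument is therefore a genuinely different (and useful) route. Part (i) is correct as written: a root $\alpha$ restricts trivially to $T_K$ exactly when $\theta(\alpha)=-\alpha$, in which case $X+\theta(X)$ (for $0\neq X\in\fg_\alpha$) is a nonzero $T_K$-invariant of $\fk$ outside $\ft$, contradicting $C_{K^0}(T_K)=T_K$ --- which holds because $K^0$ is reductive and, by the minimal rank hypothesis, $T_K$ is a maximal torus of it. The resulting identity $C_G(T_K)=T$ and the weight count matching $\Phi_K$ with $q(\Phi_G)$ are exactly as you describe; this is the standard argument.

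The one genuinely delicate point is the surjectivity step in (ii), and you have located it correctly: the obstruction is a class in $\pi_0(T^{-\theta})$. Of your two proposed fixes, the first (a ``Lang-type connectedness argument'') does not work as stated: $T^{-\theta}$ is genuinely disconnected in general (already $T^{-\theta}=T[2]$ when $\theta$ fixes $T$ pointwise), and there is no a priori reason for the class of $n^{-1}\theta(n)$ modulo $A$ to vanish without structural input. Your second fix does work, but should be unpacked into two claims: (a) $W_L$ lies in the image of $W_{K^0}\hookrightarrow W_G^{\theta}$, since for $\alpha\in\Phi_L$ the root subgroups $U_{\pm\alpha}$ lie in $[L,L]\subseteq K^0$ and furnish a representative of $s_\alpha$ in $N_{K^0}(T)$; and (b) the composite $W_{K^0}\to W_G^{\theta}\to W_{G/K}$ is onto, because $W_{G/K}\cong N_{K^0}(A)/C_{K^0}(A)$ by (\ref{eqn:wg}) and any representative $n\in N_{K^0}(A)$ normalizes $(L\cap K)^0$, hence can be corrected by an element of that group (which centralizes $A$) so as to normalize its maximal torus $T_K$. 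Given (a) and (b), the exact sequence (\ref{eqn:ext}) forces surjectivity. With that step made explicit, your proof is complete.
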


In particular, $C_G(T_K) = T$ by (i) (this may also be seen
directly). We put
$$
W_K := W^{\theta}_G \quad \text{and} \quad 
N_K := N_{K^0}(T) = N_{K^0}(T_K).
$$ 
By (ii), this yields an exact sequence  
\begin{equation}\label{eqn:ex}
1 \to T_K  \to N_K \to W_K \to 1.
\end{equation}
Moreover, by (\ref{eqn:ext}), $W_K$ fits into an exact sequence
\begin{equation}\label{eqn:exa}
1 \to W_L \to W_K \to W_{G/K} \to 1.
\end{equation}
The group $W_K$ acts on $\Phi_G$ and stabilizes the subset
$\Phi_G^{\theta} = \Phi_L$; the restriction map
$$
q : \cX(T) \to \cX(T_K)
$$
is $W_K$-equivariant and $\theta$-invariant. Denoting by $\Phi_K$ the
root system of $(K^0,T_K)$, Lemma \ref{lem:ro} (i) yields the equality
$$
\Phi_K = q(\Phi_G).
$$

We now obtain two additional auxiliary results: 

\begin{lemma}\label{lem:fi}
Let $\beta \in \Phi_K$. Then one of the following cases occurs:

\smallskip

\noindent
{\rm (a)} $q^{-1}(\beta)$ consists of a unique root, 
$\alpha \in \Phi_L$. 

\smallskip

\noindent
{\rm (b)} $q^{-1}(\beta)$ consists of two strongly orthogonal roots
$\alpha$, $\theta(\alpha)$, where $\alpha \in \Phi^+_G \setminus \Phi^+_L$
and $\theta(\alpha) \in \Phi^-_G \setminus \Phi^-_L$.

\smallskip

In particular, $q$ induces a bijection 
$q^{-1}q(\Phi_L) = \Phi_L \cong q(\Phi_L)$. Moreover, 
$\alpha$ and $\theta(\alpha)$ are strongly orthogonal for any 
$\alpha \in \Phi_G \setminus \Phi_L$; then 
$s_\alpha s_{\theta(\alpha)} \in W_G^{\theta} = W_K$ is a
representative of the reflection of $W_{G/K}$ associated with the
restricted root $\alpha - \theta(\alpha)$.  
\end{lemma}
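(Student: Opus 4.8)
The plan is to analyze how the involution $\theta$ acts on the fiber $q^{-1}(\beta)$ for each $\beta \in \Phi_K = q(\Phi_G)$. Since $q$ is $\theta$-invariant, the fiber $q^{-1}(\beta)$ is a $\theta$-stable subset of $\Phi_G$. First I would recall that $\theta$ fixes $\Phi_L = \Phi_G^\theta$ point-wise and exchanges $\Phi_G^+ \setminus \Phi_L^+$ with $\Phi_G^- \setminus \Phi_L^-$ (the roots of $P_u$ and of $\theta(P)_u$), as established in Subsec.~\ref{subsec:ss}. The key preliminary observation is that $q$ restricted to $\Phi_L$ is injective: since the minimality-of-rank hypothesis gives $T = T_K \, T^{-\theta,0}$ with finite intersection, and every root $\alpha \in \Phi_L$ is $\theta$-fixed, $\alpha$ vanishes on $T^{-\theta,0}$, so $\alpha$ is determined by its restriction $q(\alpha)$ to $T_K$. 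This yields the bijection $\Phi_L \cong q(\Phi_L)$ and shows case (a) accounts for precisely the roots in the fiber that lie in $\Phi_L$.

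Next I would treat a root $\alpha \in \Phi_G \setminus \Phi_L$. Since $q$ is $\theta$-invariant, $q(\alpha) = q(\theta(\alpha))$, so $\alpha$ and $\theta(\alpha)$ lie in the same fiber; moreover $\theta(\alpha) \neq \alpha$ because $\alpha \notin \Phi_L = \Phi_G^\theta$. I would then argue that the fiber over $\beta = q(\alpha)$ contains no third root: any $\gamma \in q^{-1}(\beta)$ satisfies $q(\gamma) = q(\alpha)$, i.e.\ $\gamma - \alpha$ vanishes on $T_K$; combined with the structure of the root system and the decomposition $T = T_K T^{-\theta,0}$, the difference $\gamma - \alpha$ must be proportional to $\alpha - \theta(\alpha) \in \cX(A/A\cap K)$, and a short argument using that these are roots forces $\gamma \in \{\alpha, \theta(\alpha)\}$. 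This gives the dichotomy and places $\alpha, \theta(\alpha)$ in the roots of $P_u$ and $\theta(P)_u$ respectively (up to relabeling by sign), establishing case (b).

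The remaining, and I expect the hardest, point is the \emph{strong orthogonality} of $\alpha$ and $\theta(\alpha)$ for $\alpha \in \Phi_G \setminus \Phi_L$, meaning neither $\alpha + \theta(\alpha)$ nor $\alpha - \theta(\alpha)$ is a root. Here I would reason that if $\alpha + \theta(\alpha)$ were a root, it would be $\theta$-fixed (as $\theta$ swaps the two summands), hence lie in $\Phi_L$; but being a nonzero sum of a $P_u$-root and a $\theta(P)_u$-root it cannot lie in the Levi $\Phi_L$, a contradiction. For $\alpha - \theta(\alpha)$ I would use that its restriction $q(\alpha - \theta(\alpha)) = q(\alpha) - q(\theta(\alpha)) = 0$ is not a root of $K$, whereas any genuine root of $\Phi_G$ has nonzero restriction to $T_K$ by Lemma~\ref{lem:ro}(i) together with the fact that $\alpha - \theta(\alpha) \notin \Phi_L$; thus $\alpha - \theta(\alpha)$ is not a root either. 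The strong orthogonality then implies $s_\alpha$ and $s_{\theta(\alpha)}$ commute, so $s_\alpha s_{\theta(\alpha)}$ is a well-defined involution in $W_G$; it is $\theta$-fixed since $\theta$ conjugates $s_\alpha$ to $s_{\theta(\alpha)}$, hence lies in $W_G^\theta = W_K$.

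Finally I would verify that $s_\alpha s_{\theta(\alpha)}$ represents the restricted reflection $s_{\alpha - \theta(\alpha)} \in W_{G/K}$. Using the exact sequence \eqref{eqn:ext} and the identification $W_{G/K} \cong N_W(A)/C_W(A) = W_G^\theta/W_L$, I would compute the action of $s_\alpha s_{\theta(\alpha)}$ on the restricted root lattice $\cX(A/A\cap K)$: applying it to a character $\chi$ and projecting via $p$, the contributions combine to give reflection in the hyperplane orthogonal to $p(\alpha) = \tfrac12 p(\alpha - \theta(\alpha))$, which is exactly $s_{\alpha - \theta(\alpha)}$. This last identification is essentially a bookkeeping computation with the pairing, so I do not expect it to present a genuine obstacle beyond careful tracking of signs coming from $\theta(\alpha) \in \Phi_G^-$.
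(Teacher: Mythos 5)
Your strategy is genuinely different from the paper's: you argue directly with the fibers of $q$ in the root lattice, whereas the paper reduces to the case $\rk(K)=1$ by passing to $C_G(S)/S$ with $S=(\ker\beta)^0\subseteq T_K$ and then reads off the dichotomy from the classification of rank-one minimal-rank pairs ($G=K^0=\PSL(2)$, or $G=\PSL(2)\times\PSL(2)$ with $\theta$ swapping the factors). Several of your steps are sound: the injectivity of $q$ on $\Phi_L$ (a $\theta$-fixed character is trivial on the connected torus $A$, hence determined by its restriction to $T_K$), the fact that $\alpha-\theta(\alpha)$ is never a root (it would restrict to $0$ on $T_K$, contradicting Lemma~\ref{lem:ro}(i), i.e.\ $C_G(T_K)=T$), and the concluding identification of $s_\alpha s_{\theta(\alpha)}$ with the restricted reflection. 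But two steps have genuine gaps.

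First, your proof that $\alpha+\theta(\alpha)$ is not a root rests on the claim that a nonzero sum of a root of $P_u$ and a root of $\theta(P)_u$ cannot lie in $\Phi_L$. That claim is false: for $\PSL(4)/\PSp(4)$ with $\Delta_L=\{\alpha_1,\alpha_3\}$ one has $(\alpha_1+\alpha_2)+(-\alpha_2)=\alpha_1\in\Phi_L$. In fact $\Phi_L$ is exactly the set of roots vanishing on $A$, and $(\alpha+\theta(\alpha))\vert_A=0$ because $\theta$ acts on $A$ by inversion, so this kind of positivity argument cannot work. A correct route: if $\alpha+\theta(\alpha)$ were a root it would be $\theta$-fixed, hence in $\Phi_L$, and then $q(\alpha+\theta(\alpha))=2\beta$ would be a root of $K^0$ by Lemma~\ref{lem:ro}(i); since $\beta\in\Phi_K$ and the root system of the connected reductive group $K^0$ is reduced, this is impossible.

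Second, the ``no third root in the fiber'' step is not actually proved. The condition $q(\gamma)=q(\alpha)$ says only that $\gamma-\alpha$ vanishes on $T_K$, equivalently $\theta(\gamma-\alpha)=-(\gamma-\alpha)$; the lattice of such characters has rank $\rk(G/K)$, which exceeds $1$ in most examples (e.g.\ the group case), so $\gamma-\alpha$ is not forced to be proportional to $\alpha-\theta(\alpha)$, and even granting proportionality you have not excluded further roots on the line. This step must also rule out a fiber containing simultaneously a root of $\Phi_L$ and roots outside $\Phi_L$, which your sketch leaves implicit. This is precisely where the paper's reduction does the real work: all of $q^{-1}(\beta)$ consists of roots of $C_G(S)$, and after dividing by $S$ one is in a rank-one situation where the fiber is visibly a single $\theta$-fixed root or a pair $\{\alpha,\theta(\alpha)\}$. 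To keep a direct argument you would need some additional input, e.g.\ comparing $T_K$-weight multiplicities in the decomposition of $\fg$ into the $(\pm 1)$-eigenspaces of $\theta$; lattice bookkeeping alone does not suffice.
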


\begin{proof}
Note that any $\alpha \in q^{-1}(\beta)$ is a root of $(C_G(S),T)$,
where $S \subseteq T_K$ denotes the identity component of the kernel
of $\beta$, and $C_G(S)$ stands for the centralizer of $S$ in
$G$. Also, $C_G(S)$ is a connected reductive $\theta$-stable
subgroup of $G$, and the symmetric space $C_G(S)/C_K(S)$ is of minimal
rank. Moreover, $\theta$ yields an involution of the quotient group
$C_G(S)/S$, and the corresponding symmetric space is still of minimal
rank. So we may reduce to the case where $S$ is trivial, i.e., $K$ has
rank $1$. Since $\beta$ is a root of $K^0$, it follows that 
$K^0 \cong \SL(2)$ or $\PSL(2)$. Together with the minimal rank
assumption, it follows that one of the following cases occurs, up to
an isogeny of $G$:

\noindent
(a) $K^0 = G = \PSL(2)$; then $\theta$ is trivial.

\noindent
(b) $K^0 = \PSL(2)$ and $G = \PSL(2) \times \PSL(2)$; then $\theta$
exchanges both factors.

This implies our assertions.
\end{proof} 

We will identify $q(\Phi_L)$ with $\Phi_L$ in view of Lemma
\ref{lem:fi}.

\begin{lemma}\label{lem:res}
{\rm (i)} $q$ induces bijections 
$$
\Phi_G^+ \setminus \Phi_L^+ \cong \Phi_K \setminus \Phi_L 
\cong \Phi_G^- \setminus \Phi_L^- . 
$$

\smallskip

\noindent
{\rm (ii)} $\Phi_K \setminus \Phi_L$ is a root system, stable under
the action of $W_K$ on $\Phi_K$.

\smallskip

\noindent
{\rm (iii)} The restricted root system $\Phi_{G/K}$ is reduced. 
\end{lemma}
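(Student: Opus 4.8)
The plan is to deduce all three parts from Lemma~\ref{lem:fi} together with Lemma~\ref{lem:ro}(ii), treating (i) and (ii) as essentially formal consequences and reserving the real work for (iii).

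For (i), I would start from the dichotomy of Lemma~\ref{lem:fi}: a root $\beta \in \Phi_K$ lies in $\Phi_L$ precisely when $q^{-1}(\beta)$ is a single root of $\Phi_L$ (case (a)), so that $\beta \in \Phi_K \setminus \Phi_L$ forces case (b), in which $q^{-1}(\beta) = \{\alpha, \theta(\alpha)\}$ with $\alpha \in \Phi_G^+ \setminus \Phi_L^+$ and $\theta(\alpha) \in \Phi_G^- \setminus \Phi_L^-$. Since $q$ is $\theta$-invariant, every such $\beta$ has exactly one preimage in $\Phi_G^+ \setminus \Phi_L^+$ and exactly one in $\Phi_G^- \setminus \Phi_L^-$; reading this off shows that the maps induced by $q$ are the two announced bijections. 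The only thing to check is that no $\beta$ switches between cases (a) and (b), which is immediate from the identification $q(\Phi_L) = \Phi_L$ coming from the same lemma.

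For (ii), I would first prove $W_K$-stability and then observe that the root-system property is free. As $W_K = W_G^{\theta}$ stabilizes $\Phi_L = \Phi_G^{\theta}$, it stabilizes $\Phi_G \setminus \Phi_L$; since $q$ is $W_K$-equivariant and, by part (i), carries $\Phi_G \setminus \Phi_L$ onto $\Phi_K \setminus \Phi_L$, the set $\Phi_K \setminus \Phi_L$ is $W_K$-stable. It is symmetric, because $\Phi_K$ and $\Phi_L$ both are. Now by Lemma~\ref{lem:ro}(ii) the group $W_K$ \emph{is} the Weyl group of $\Phi_K$, so it contains every reflection $s_\beta$ with $\beta \in \Phi_K \setminus \Phi_L$; hence $\Phi_K \setminus \Phi_L$ is closed under its own reflections and, inheriting integrality from $\Phi_K$, is a root system in the subspace it spans.

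The crux is (iii), which I would prove by contradiction. Suppose $\lambda$ and $2\lambda$ both lie in $\Phi_{G/K} = p(\Phi_G) \setminus \{0\}$ and choose $\alpha, \alpha' \in \Phi_G$ with $p(\alpha) = \lambda$ and $p(\alpha') = 2\lambda$; since $p$ kills $\Phi_L$, both $\alpha$ and $\alpha'$ lie in $\Phi_G \setminus \Phi_L$. Fix a $W_G$- and $\theta$-invariant inner product on $\cX(T) \otimes \bbR$, and note that $p$ corresponds to the orthogonal projection onto the $(-1)$-eigenspace of $\theta$, so $p(\alpha) = \tfrac12(\alpha - \theta(\alpha))$. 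Strong orthogonality of $\alpha$ and $\theta(\alpha)$ (Lemma~\ref{lem:fi}) gives $|p(\alpha)|^2 = \tfrac12 |\alpha|^2$, and likewise for $\alpha'$, so $|p(\alpha')| = 2|p(\alpha)|$ forces $|\alpha'| = 2|\alpha|$. Because two roots in a common irreducible component of a reduced root system have length ratio at most $\sqrt{3}$, the root $\alpha'$ must lie in a different component from each of $\alpha$ and $\theta(\alpha)$, whence $\langle \alpha', \alpha^\vee \rangle = \langle \alpha', \theta(\alpha)^\vee \rangle = 0$. On the other hand $\mu := \alpha' - 2\alpha$ is $\theta$-fixed, so $\langle \mu, \theta(\alpha)^\vee \rangle = \langle \mu, \alpha^\vee \rangle$; combined with $\langle \alpha, \theta(\alpha)^\vee \rangle = 0$, expanding $0 = \langle \alpha', \theta(\alpha)^\vee \rangle$ yields $\langle \mu, \alpha^\vee \rangle = 0$, and then $0 = \langle \alpha', \alpha^\vee \rangle = 2\langle \alpha, \alpha^\vee \rangle + \langle \mu, \alpha^\vee \rangle = 4$, a contradiction. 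The delicate point is the passage from $|\alpha'| = 2|\alpha|$ to orthogonality: it relies on $\Phi_G$ being reduced (which holds, as it is the root system of a reductive group) together with the bound on length ratios inside an irreducible component, and this is precisely where the minimal-rank hypothesis, funneled through the strong orthogonality of Lemma~\ref{lem:fi}, enters.
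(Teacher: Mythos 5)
Your proposal is correct. Parts (i) and (ii) follow essentially the same route as the paper: (i) is read off from the dichotomy of Lemma~\ref{lem:fi}, and (ii) from the $W_K$-stability of $\Phi_L$ (the paper states both in two lines; your version just spells out the details). The genuine divergence is in (iii). The paper reformulates reducedness as the non-existence of $\alpha_1,\alpha_2\in\Phi_G\setminus\Phi_L$ with $\alpha_2-\theta(\alpha_2)=2(\alpha_1-\theta(\alpha_1))$ and then reuses the reduction trick of Lemma~\ref{lem:fi}: passing to the centralizer of the identity component of the intersection of the kernels of $\alpha_1$, $\theta(\alpha_1)$, $\alpha_2$, it reduces to $\rk(G)\le 3$ with $G$ adjoint semi-simple, where the only candidates are $(\PSL(2)\times\PSL(2),\PSL(2))$ and $(\PSL(4),\PSp(4))$, and concludes by inspection. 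You instead give a direct, classification-free computation: the strong orthogonality of $\alpha$ and $\theta(\alpha)$ from Lemma~\ref{lem:fi} gives $|p(\alpha)|^2=\tfrac12|\alpha|^2$, hence $|\alpha'|=2|\alpha|$; since an irreducible reduced root system has length ratios at most $\sqrt3$, this forces $\alpha'\perp\alpha$ and $\alpha'\perp\theta(\alpha)$, and the $\theta$-invariance of $\mu=\alpha'-2\alpha$ then produces the contradiction $0=\langle\alpha',\alpha^\vee\rangle=4$. I checked the steps ($p$ kills $\Phi_L$ so both preimages lie in $\Phi_G\setminus\Phi_L$ and Lemma~\ref{lem:fi} applies to each; $\theta$ is an isometry for an averaged invariant form, giving $\langle\mu,\theta(\alpha)^\vee\rangle=\langle\mu,\alpha^\vee\rangle$) and they all hold. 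Your argument is longer to write but avoids any case analysis and makes explicit exactly where the minimal-rank hypothesis enters (through strong orthogonality), whereas the paper's argument is shorter but leans on the classification of low-rank minimal-rank pairs and an unstated inspection.
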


\begin{proof}
(i) follows readily from Lemma \ref{lem:fi}.

(ii) Since $\Phi_L$ is stable under $W_K$, then so is 
$\Phi_K \setminus \Phi_L$. In particular, the latter is stable
under any reflection $s_\beta$, where
$\beta \in \Phi_K \setminus \Phi_L$. It follows that 
$\Phi_K \setminus \Phi_L$ is a root system.

(iii) We have to check the non-existence of roots 
$\alpha_1, \alpha_2 \in \Phi_G \setminus \Phi_L$ such that 
$\alpha_2 - \theta(\alpha_2) = 2(\alpha_1 - \theta(\alpha_1))$. 
Considering the identity component of the intersection of kernels of
$\alpha_1$, $\theta(\alpha_1)$ and $\alpha_2$ and arguing as in the
proof of Lemma \ref{lem:fi}, we may assume that 
$\rk(G) \leq 3$. Clearly, we may further assume that $G$ is 
semi-simple and adjoint. Then the pair $(G,K)$ is either
$(\PSL(2) \times \PSL(2), \PSL(2))$ or $(\PSL(4),\PSp(4))$, and the
assertion follows by inspection.
\end{proof}

Also, note that the pull-back under $q$ of any system of positive roots
of $\Phi_K$ is a $\theta$-stable system of positive roots of
$\Phi_G$. Let $\Sigma_G$ be the corresponding basis of $\Phi_G$; then
$q(\Sigma_G)$ is a basis of $\Phi_K$. If $G$ is semi-simple, then the
involution $\theta$ is uniquely determined by the its restriction to
$\Sigma_G$, and the latter is an involution of the Dynkin diagram of
$G$. It follows that the symmetric spaces of minimal rank under an
adjoint semi-simple group are exactly the products of symmetric spaces
that occur in the following list.

\begin{examples}
1) Let $G = \bG \times \bG$, where $\bG$ is an adjoint semi-simple
group, and let $\theta$ be the involution of $G$ such that
$\theta(x,y) = (y,x)$; then $K = \diag(\bG)$ and $\rk(G/K) = \rk(\bG)$.
 
The maximal $\theta$-stable subtori $T \subset G$ are exactly the
products  $\bT \times \bT$, where $\bT$ is a maximal torus of $\bG$;
then $T_K = \diag(\bT)$, and $A = \{(x,x^{-1}) ~\vert~ x \in \bT \}$.
Thus, $L = T$, and 
$W_K = W_{G/K} = \diag(W_{\bG}) \subset W_{\bG} \times W_{\bG} = W_G$.

Moreover, $\Phi_G$ is the disjoint union of two copies of
$\Phi_{\bG}$, each of them being mapped isomorphically to 
$\Phi_K = \Phi_{G/K}$ by $q$.
\smallskip

\noindent
2) Consider the group $G = \PSL(2n)$ and the involution $\theta$ 
associated with the symmetry of the Dynkin diagram; then 
$K = \PSp(2n)$ and $\rk(G/K) = n-1$.

The Levi subgroup $L$ is the image in $G = \PGL(2n)$ of the product
$\GL(2) \times \cdots \times \GL(2)$ ($n$ copies). The Weyl group
$W_G$ is the symmetric group $S_{2n}$, and $W_K$ is the subgroup
preserving the partition of the set $\{1, 2, \ldots,2n \}$ into the
$n$ subsets $\{1,2 \}$, $\{3,4 \}$, $\ldots$, $\{ 2n-1,2n \}$. Thus,
$W_K$ is the semi-direct product of $S_2 \times \cdots \times S_2$
($n$ copies) with $S_n$. Moreover, 
$W_L = S_2 \times \cdots \times S_2$, so that $W_{G/K} = S_n$. 

For the root systems, we have $\Phi_G = A_{2n-1}$, $\Phi_K = C_n$,
$\Phi_L = A_1 \times \cdots \times A_1$ (the subset of 
long roots of $\Phi_K$), $\Phi_K \setminus \Phi_L = D_n$ (the short
roots), and $\Phi_{G/K} = A_{n-1}$.

\smallskip

\noindent
3) Consider the group $G = \PSO(2n)$ and the involution $\theta$ 
associated with the symmetry of the Dynkin diagram (of type $D_n$);
then $K = \PSO(2n-1)$ and $\rk(G/K) = 1$. 

The Levi subgroup $L$ is the
image in $G$ of $\SO(2) \times \SO(2n-2)$. The Weyl group
$W_G$ is the semi-direct product of $\{\pm 1 \}^{n-1}$ with $S_n$, and
$W_K$ is the semi-direct product of $\{\pm 1 \}^{n-1}$ with $S_{n-1}$. 
Moreover, $W_L$ is the semi-direct product of $\{\pm 1 \}^{n-2}$ with
$S_{n-1}$, so that $W_{G/K} = \{ \pm 1 \}$. 

We have $\Phi_G = D_n$, $\Phi_K = B_{n-1}$, $\Phi_L = D_{n-1}$ (the
subset of long roots of $\Phi_K$), 
$\Phi_K \setminus \Phi_L = A_1 \times \cdots \times A_1$ ($n-1$
copies), and $\Phi_{G/K} = A_1$.

\smallskip
4) Let $G$ be an adjoint simple group of type $E_6$, and $\theta$ the 
involution associated with the symmetry of the Dynkin diagram; then
$K$ is a simple group of type $F_4$, and $\rk(G/K) = 2$. 

The Levi subgroup $L$ has type $D_4$, and $W_{G/K} = S_3$. We have
$\Phi_G = E_6$, $\Phi_K = F_4$, $\Phi_L = D_4$ (the subset of long
roots of $\Phi_K$), $\Phi_K \setminus \Phi_L = D_4$, and 
$\Phi_{G/K} = A_2$.
\end{examples}

\section{Equivariant Chow ring} 
\label{sec:gws}

\subsection{Wonderful symmetric varieties of minimal rank}
\label{subsec:fpc}

From now on, we consider an adjoint semi-simple group $G$
equipped with an involutive automorphism $\theta$ such that the
corresponding symmetric space $G/K$ is of minimal rank. Then the group
$K$ is connected, semi-simple and adjoint; see \cite[Lem.~5]{Br04}.

We choose a $\theta$-stable maximal torus $T \subseteq G$, so that 
$A := T^{-\theta,0}$ is a maximal $\theta$-split subtorus. Also, we
put $T_K := T^{\theta}$; this group is connected by 
\cite[Lem.~5]{Br04} again. Thus, $T_K$ is a maximal torus of $K$. In
agreement with the notation of Subsec.~\ref{subsec:ssm}, we denote by
$N_K$ the normalizer of $T_K$ in $K$, and by $W_K$ the Weyl group
of $(K,T_K)$.

As in Subsec.~\ref{subsec:wc}, we denote by $X$ the wonderful
compactification of $G/K$, also called a 
\emph{wonderful symmetric variety of minimal rank}. 
The associated toric variety $Y$ is the closure in 
$X$ of $T/T_K \cong A/A \cap K$. Recall that $Y$ is stable under
the subgroup $L N_K \subseteq G$, and fixed point-wise by $L \cap K$. 
Thus, $L N_K$ acts on $Y$ via its quotient group 
$$
L N_K / (L \cap K) \cong (T/T_K) \, W_{G/K} \cong T N_K/T_K.
$$ 
We will mostly consider $Y$ as a $TN_K$-variety.

By \cite[Sec.~10]{Tc05}, $X$ contains only finitely many $T$-stable
curves. We now obtain a precise description of all these curves, and of
those that lie in $Y$. This may be deduced from the results of
[loc.~cit.], which hold in the more general setting of wonderful
varieties of minimal rank, but we prefer to provide direct, somewhat
simpler arguments. 

\begin{lemma}\label{lem:fix}
{\rm (i)} The $T$-fixed points in $X$ (resp.~$Y$) are exactly the
points $w \cdot z$, where $w \in W$ (resp.~$W_K$), and $z$ denotes the
unique $T$-fixed point of $Y_0$. These fixed points are
parametrized by $W_G/W_L$ (resp.~$W_K/W_L \cong W_{G/K}$).

\smallskip

\noindent
{\rm (ii)} For any $\alpha \in \Phi^+_G \setminus \Phi^+_L$, there
exists a unique irreducible $T$-stable curve $C_{z,\alpha}$ which
contains $z$ and on which $T$ acts through its character 
$\alpha$. The $T$-fixed points in $C_{z,\alpha}$ are exactly $z$ and 
$s_\alpha \cdot z$.

\smallskip

\noindent
{\rm (iii)} For any 
$\gamma = \alpha - \theta(\alpha) \in \Delta_{G/K}$, there exists a
unique irreducible $T$-stable curve $C_{z,\gamma}$ which 
contains $z$ and on which $T$ acts through its character
$\gamma$. The $T$-fixed points in $C_{z,\gamma}$ are exactly $z$ and 
$s_\alpha s_{\theta(\alpha)} \cdot z$.

\smallskip

\noindent
{\rm (iv)} The irreducible $T$-stable curves in $X$ are 
the $W_G$-translates of the curves $C_{z,\alpha}$ and $C_{z,\gamma}$.
They are all isomorphic to $\bbP^1$.
\smallskip

\noindent
{\rm (v)} The irreducible $T$-stable curves in $Y$ are the
$W_{G/K}$-translates of the curves $C_{z,\gamma}$.
\end{lemma}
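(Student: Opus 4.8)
The plan is to reduce everything to a local analysis at the base point $z$, using the local structure isomorphism (\ref{eqn:ls}) and the known list of $T$-weights on $T_zX$, and then to spread the resulting picture around by the actions of $W_G$ and $W_{G/K}$. For (i), I would first record that $z$ is the \emph{unique} $T$-fixed point of $X_0$: under (\ref{eqn:ls}) the torus $T$ acts on $P_u$ by conjugation with only nonzero weights, so the $T$-fixed locus of $P_u\times Y_0$ is $\{e\}\times Y_0^T=\{(e,z)\}$; hence for $w\in W_G$ (represented in $N_G(T)$) the translate $\dot w X_0$ is $T$-stable (as $X_0$ is $P$-stable and $\dot w^{-1}T\dot w=T$) with unique $T$-fixed point $\dot w\cdot z$. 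To see that these exhaust $X^T$, I would show every $T$-fixed point lies in the closed orbit $G/\theta(P)$, and this is where minimal rank enters. Using (\ref{eqn:ls}) and the fact that each $G$-orbit meets $Y_0$ transversally along a single $A/A\cap K$-orbit, the isotropy group of a point of the orbit $X_I$ contains the torus $T_K\cdot A_y$, where $A_y\subseteq A/A\cap K$ is the ($|I|$-dimensional) isotropy subtorus of the corresponding point of $Y_0$; one checks this torus is maximal in the isotropy group, so the latter has reductive rank $\rk(K)+|I|$. Since $\rk(K)+r=\rk(G)$ by minimal rank, this equals $\rk(G)$—and the isotropy group can contain a maximal torus of $G$—exactly when $I=\{1,\dots,r\}$, i.e.\ for the closed orbit. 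The $T$-fixed points of $G/\theta(P)$ are the classical $\dot w\cdot z$, with $\dot w\cdot z=\dot{w'}\cdot z$ iff $w^{-1}w'\in W_{\theta(L)}=W_L$, giving the parametrization by $W_G/W_L$. For $Y$, I would invoke toric geometry: the $T$-fixed points are the vertices of the fan, i.e.\ the Weyl chambers of $\Phi_{G/K}$, permuted simply transitively by $W_{G/K}$; by (\ref{eqn:ywy}) and (\ref{eqn:exa}) these are the $\dot w\cdot z$ with $w\in W_K$, parametrized by $W_K/W_L\cong W_{G/K}$.

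For (ii) and (iii), the key input is the list of $T$-weights on $T_zX$: the roots $\alpha\in\Phi_G^+\setminus\Phi_L^+$ (tangent to $G\cdot z$) and the simple restricted roots $\gamma=\alpha-\theta(\alpha)$ (tangent to $Y$). I would first check that these weights are pairwise distinct, hence of multiplicity one: the $\alpha$ are distinct roots, the $\gamma$ form part of a basis, and no $\gamma$ is a root because $\alpha$ and $\theta(\alpha)$ are strongly orthogonal by Lemma \ref{lem:fi}. Existence of the curves is by explicit construction: $C_{z,\alpha}=\overline{U_\alpha\cdot z}$ inside the flag variety $G\cdot z$ (a $\bbP^1$ with $T$ acting by $\alpha$ and second fixed point $s_\alpha\cdot z$), and $C_{z,\gamma}$ is the toric curve in $Y_0$ along the coordinate axis of weight $\gamma$ (a $\bbP^1$ whose second fixed point is $s_\alpha s_{\theta(\alpha)}\cdot z$, the representative of $s_\gamma$ furnished by Lemma \ref{lem:fi}). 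Uniqueness follows from the local linearization of the $T$-action at the smooth fixed point $z$: an irreducible $T$-stable curve through $z$ is, \'etale-locally, the closure of a one-dimensional $T$-orbit in $T_zX$, and since the weights are multiplicity one and pairwise non-proportional (here I use that $\Phi_G$ and, by Lemma \ref{lem:res}(iii), $\Phi_{G/K}$ are reduced), such a curve is the coordinate line determined by its tangent weight.

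Parts (iv) and (v) are then bookkeeping. Every irreducible $T$-stable curve in $X$ is complete, hence contains a $T$-fixed point, necessarily some $\dot w\cdot z$; translating by $\dot w^{-1}$ yields a $T$-stable curve through $z$, which by the uniqueness above is one of the $C_{z,\alpha}$ or $C_{z,\gamma}$. Thus every $T$-curve is a $W_G$-translate of these, and all are isomorphic to $\bbP^1$. For (v), a $T$-curve contained in $Y$ passes through a $T$-fixed point of $Y$, i.e.\ some $\dot w\cdot z$ with $w\in W_K$; after translating into $z$ it must be one of the curves through $z$ lying in $Y$, and among these only the $C_{z,\gamma}$ qualify: $C_{z,\alpha}$ has tangent weight $\alpha$, which is not among the tangent weights $\{\gamma_i\}$ of $Y$ at $z$, so it is transverse to $Y$. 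Hence the $T$-curves in $Y$ are exactly the $W_{G/K}$-translates of the $C_{z,\gamma}$, consistent with $Y$ being $W_{G/K}$-stable.

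The main obstacle is the localization step in (i)—showing that $X^T$ meets no non-closed orbit—because it is the one point where the minimal rank hypothesis is genuinely used, through the reductive-rank computation of the isotropy groups; the supporting claim that $T_K\cdot A_y$ is a \emph{maximal} torus of the isotropy group is the part deserving care. The uniqueness argument for curves in (ii)–(iii) is the secondary delicate point, hinging on the multiplicity-one and non-proportionality of the tangent weights at $z$.
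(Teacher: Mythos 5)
Your proof is correct. For parts (ii)--(v) it takes essentially the paper's route: translate an arbitrary irreducible $T$-stable curve to one through $z$, intersect with the chart $X_0\cong P_u\times Y_0$, and use the structure of the linear $T$-action there to conclude that the curve is a coordinate line, hence a $\bbP^1$ on which $T$ acts through some $\alpha\in\Phi_G^+\setminus\Phi_L^+$ or some $\gamma\in\Delta_{G/K}$; the second fixed point is then read off inside the closed orbit or inside the toric variety $Y$, exactly as in the paper. You are in fact a bit more careful at one step: the paper infers ``coordinate line'' from multiplicity one of the weights alone, whereas one also needs that no two tangent weights at $z$ are positively proportional, which you verify ($\Phi_G$ is reduced, the $\gamma$'s form part of a basis of $\cX(A/A\cap K)$, and no $\gamma$ can be proportional to an $\alpha$ since $\gamma$ vanishes on $T_K$ while $\alpha$ does not). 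The one genuine divergence is part (i) for $X$: the paper simply cites \cite[Lem.~6]{Br04}, while you prove the statement directly by showing that the isotropy group of a point of the $G$-orbit whose closure is $X_I$ has reductive rank $\rk(K)+|I|$, so that only the closed orbit ($|I|=r$) can carry $T$-fixed points; this is precisely where the minimal-rank hypothesis enters, and your argument makes that visible. The maximality of your torus $T_K\cdot A_y$ in the isotropy group, which you rightly single out as the point deserving care, does hold: any torus containing it centralizes $T_K$, hence lies in $C_G(T_K)=T$, and the explicit weights $-\gamma_1,\dots,-\gamma_r$ of $T$ on $Y_0\cong\bbA^r$ show that the identity component of the $T$-stabilizer of the relevant point of $Y_0$ is exactly $T_K\cdot A_y$, of dimension $\rk(K)+|I|$. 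Your version buys a self-contained proof of (i); the paper buys brevity via an external reference.
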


\begin{proof}
The assertions on the $T$-fixed points in $X$ are proved in
\cite[Lem.~6]{Br04}. And since $Y$ is the toric variety associated
with the Weyl chambers of $\Phi_{G/K}$, the group $W_{G/K}$ acts simply
transitively on its $T$-fixed points. This proves (i).

Let $C \subset X$ be an irreducible $T$-stable curve. Replacing $C$
with a $W_G$-translate, we may assume that it contains $z$. Then
$C \cap X_0$ is an irreducible $T$-stable curve in $X_0$, an
affine space where $T$ acts linearly with weights the positive roots 
$\alpha \in \Phi^+_G \setminus \Phi^+_L$, and the simple restricted
roots $\gamma = \alpha - \theta(\alpha)$, 
$\alpha \in \Delta_G \setminus \Delta_L$. Since all these weights have
multiplicity $1$, it follows that $C \cap X_0$ is a coordinate line
in $X_0$. Thus, $C$ is isomorphic to $\bbP^1$ where $T$ acts through
$\alpha$ or $\gamma$. In the former case, $C$ is contained in the
closed $G$-orbit $G \cdot z$; it follows that its other $T$-fixed
point is $s_\alpha \cdot z$. In the latter case, $C$ is contained in
$Y$, and hence its other $T$-fixed point corresponds to a simple
reflection in $W_{G/K}$. By considering the weight of the $T$-action
on $C$, this simple reflection must be the image in $W_{G/K}$ of 
$s_{\alpha} s_{\theta(\alpha)} \in W_K$. 
This implies the remaining assertions (ii)-(v).
\end{proof}

\subsection{Structure of the equivariant Chow ring}
\label{subsec:eq}

We will obtain a description of the $G$-equivariant Chow ring of $X$
with rational coefficients. For this, we briefly recall some
properties of equivariant intersection theory, referring to
\cite{Br97, EG98} for details.

To any nonsingular variety $Z$ carrying an action of a linear
algebraic group $H$, one associates the equivariant Chow ring
$A^*_H(Z)$. This is a positively graded ring with degree-$0$ part
$\bbZ$, and degree-$1$ part the equivariant Picard group $\Pic_H(Z)$
consisting of isomorphism classes of $H$-linearized invertible
sheaves on $Z$. 

Every closed $H$-stable subvariety $Y \subseteq Z$ of codimension
$n$ yields an equivariant class
$$
[Y]_H \in A^n_H(Z).
$$
The class $[Z]_H$ is the unit element of $A^*_H(Z)$.

Any equivariant morphism $f: Z \to Z'$, where $Z'$ is a nonsingular
$H$-variety, yields a pull-back homomorphism
$$
f^* : A^*_H(Z') \to A^*_H(Z).
$$ 
In particular, $A^*_H(Z)$ is an algebra over $A^*_H(pt)$, where $pt$
denotes $\Spec k$.

The equivariant Chow ring of $Z$ is related to the ordinary Chow ring
$A^*(Z)$ via a homomorphism of graded rings
$$
\varphi_H: A^*_H(Z) \to A^*(Z)
$$
which restricts trivially to the ideal of $A^*_H(Z)$ generated by
$A^+_H(pt)$ (the positive part of $A^*_H(pt)$). If $H$ is connected,
then $\varphi_H$ induces an isomorphism over the rationals:
$$
A^*_H(Z)_{\bbQ}/ A^+_H(pt) A^*_H(Z)_{\bbQ} \cong A^*(Z)_{\bbQ}.
$$

More generally, there is a natural homomorphism of graded rings
$$
\varphi_H^{H'}: A^*_H(Z) \to A^*_{H'}(Z)
$$ 
for any closed subgroup $H' \subset H$. If $H' = H^0$, the neutral
component of $H$, then the group of components $H/H^0$ acts on the
graded ring $A^*_{H^0}(Z)$, and the image of $\varphi_H^{H'}$ is
contained in the invariant subring $A^*_{H^0}(Z)^{H/H^0}$. Moreover,
$\varphi_H^{H^0}$ induces an isomorphism of rational equivariant Chow
rings
$$
A^*_H(Z)_{\bbQ} \cong A^*_{H^0}(Z)^{H/H^0}_{\bbQ}.
$$

If $H$ is a connected reductive group, and $T \subseteq H$ is a
maximal torus with normalizer $N$ and associated Weyl group $W$, then
the composite of the canonical maps
$$
A^*_H(Z) \to A^*_N(Z) \to A^*_T(Z)^W 
$$ 
is an isomorphism over the rationals. In particular, we obtain an
isomorphism 
$$
A^*_H(pt)_{\bbQ} \cong A^*_T(pt)_{\bbQ}^W.
$$ 
Furthermore, $A^*_T(pt)$ is canonically isomorphic to the symmetric
algebra (over the integers) of the character group $\cX(T)$. This
algebra will be denoted by $S_T$, or just by $S$ if this yields no
confusion.

Returning to the $G$-variety $X$, we may now state our structure result:

\begin{theorem}\label{thm:eq}
The map
\begin{equation}\label{eqn:iso}
r: A^*_G(X) \to A^*_T(X)^{W_G}  \to  A^*_T(X)^{W_K} \to
A^*_T(Y)^{W_K}
\end{equation}
obtained by composing the canonical maps, is an isomorphism over the
rationals.
\end{theorem}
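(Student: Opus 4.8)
The plan is to reduce the statement to a comparison of Goresky--Kottwitz--MacPherson (GKM) presentations of $A^*_T(X)_{\bbQ}$ and $A^*_T(Y)_{\bbQ}$. The first arrow in (\ref{eqn:iso}) is already an isomorphism over $\bbQ$ by the general properties of equivariant intersection theory recalled above (the composite $A^*_G(X)\to A^*_N(X)\to A^*_T(X)^{W_G}$), so it suffices to treat the composite $A^*_T(X)^{W_G}\hookrightarrow A^*_T(X)^{W_K}\to A^*_T(Y)^{W_K}$, whose first map is the inclusion of invariant subrings (as $W_K\subseteq W_G$) and whose second map is restriction to the $W_K$-stable subvariety $Y$. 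Since $X$ and $Y$ are smooth and projective and carry only finitely many $T$-fixed points and $T$-stable curves by Lemma \ref{lem:fix}, I would invoke the localization theorem of \cite{Br98} (following \cite{GKM99}): the restrictions to fixed points
$$
A^*_T(X)_{\bbQ} \hookrightarrow \bigoplus_{p \in X^T} S_{\bbQ},
\qquad
A^*_T(Y)_{\bbQ} \hookrightarrow \bigoplus_{p \in Y^T} S_{\bbQ}
$$
are injective, with image the tuples $(f_p)$ satisfying $f_p \equiv f_q \pmod{\chi}$ for every $T$-stable curve joining fixed points $p,q$ on which $T$ acts through the character $\chi$. Under these identifications, restriction along $Y \subseteq X$ becomes restriction of tuples along $Y^T \subseteq X^T$.

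Next I would exploit the homogeneity of the fixed-point sets. By Lemma \ref{lem:fix}(i) we have $X^T = W_G \cdot z \cong W_G/W_L$ and $Y^T = W_K \cdot z \cong W_K/W_L$, with $W_G$ (resp.\ $W_K$) permuting the factors compatibly with its action on $S$. Hence a $W_G$-invariant (resp.\ $W_K$-invariant) tuple is determined by its value $h := f_z$ at $z$, which lies in $S^{W_L}_{\bbQ}$ because $W_L$ is the isotropy group of $z$; conversely any such $h$ extends to an invariant tuple by $f_{wz} := w \cdot h$, well defined on $W_G/W_L$ (resp.\ $W_K/W_L$) precisely because $h$ is $W_L$-invariant. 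Thus both $A^*_T(X)^{W_G}_{\bbQ}$ and $A^*_T(Y)^{W_K}_{\bbQ}$ are identified with subsets of the single ring $S^{W_L}_{\bbQ}$, cut out by the respective GKM conditions, and the map $r$ becomes the tautological inclusion $h \mapsto h$.

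It then remains to compare the two systems of GKM conditions, which by invariance need only be checked on one curve per $W_G$- (resp.\ $W_{G/K}$-) orbit, i.e.\ on the curves through $z$ listed in Lemma \ref{lem:fix}. For $Y$ the only such curves are the $C_{z,\gamma}$ with $\gamma = \alpha - \theta(\alpha) \in \Delta_{G/K}$, yielding the conditions $h \equiv s_\alpha s_{\theta(\alpha)}(h) \pmod{\gamma}$. For $X$ one has in addition the curves $C_{z,\alpha}$ with $\alpha \in \Phi^+_G \setminus \Phi^+_L$, giving $h \equiv s_\alpha(h) \pmod{\alpha}$; but this holds automatically, since $h - s_\alpha(h)$ is divisible by $\alpha$ for every $h \in S$. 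Therefore the two subsets of $S^{W_L}_{\bbQ}$ coincide, and $r$ is an isomorphism.

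The step I expect to require the most care is this last comparison, together with the justification of the GKM presentation itself. Concretely, one must pin down the precise integral form of the localization theorem that applies to $A^*_G$ (not merely to cohomology), using the finiteness of fixed points and curves from Lemma \ref{lem:fix}, and one must verify cleanly that checking the curve congruences on orbit representatives through $z$ suffices and that the extension $f_{wz} := w\cdot h$ is well defined on $W_G/W_L$. The conceptual heart, however, is the observation that the $T$-stable curves of $X$ not contained in $Y$, namely the $C_{z,\alpha}$ lying in the closed orbit $G\cdot z$, impose no constraint beyond the automatic reflection divisibility; this is exactly what makes every $W_K$-invariant GKM class on $Y$ extend to a $W_G$-invariant GKM class on $X$.
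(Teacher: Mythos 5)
Your proposal is correct and follows essentially the same route as the paper: the precise localization/GKM description of $A^*_T(X)$ and $A^*_T(Y)$ via the finitely many $T$-fixed points and $T$-stable curves of Lemma \ref{lem:fix}, the identification of both invariant rings with subrings of $S^{W_L}_{\bbQ}$ by restriction to $z$, and the key observation that the curves $C_{z,\alpha}$ in the closed orbit impose only the automatic congruence $f \equiv s_\alpha \cdot f \pmod{\alpha}$, so that only the curves $C_{z,\gamma}$ in $Y$ contribute genuine conditions. The paper's source for the integral localization statement you flag as the delicate point is \cite[Sec.~3.4]{Br97} (with the adaptation following \cite[Sec.~3.1]{Br98}).
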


\begin{proof}
We adapt the arguments of \cite[Sec.~3.1]{Br98} regarding regular
compactifications of reductive groups; our starting point is
the precise version of the localization theorem obtained in
\cite[Sec.~3.4]{Br97}. Together with Lemma \ref{lem:fix},
it implies that the $T$-equivariant Chow ring $A^*_T(X)$ may be
identified as an $S$-algebra to the space of tuples 
$(f_{w \cdot  z})_{w \in W_G/W_L}$ of elements of $S$ such that 
$$
f_{v\cdot z} \equiv f_{w \cdot z} \quad (\modu \chi)
$$
whenever the $T$-fixed points $v \cdot z$ and $w \cdot z$ are joined
by an irreducible $T$-stable curve where $T$ acts through its
character $\chi$. This identification is obtained by restricting to
the fixed points. The ring structure on the above space of tuples is
given by pointwise addition and multiplication; moreover, $S$ is
identified with the subring of constant tuples $(f)$.

It follows that $A^*_G(X)_{\bbQ} \cong A^*_T(X)_{\bbQ}^{W_G}$ may be
identified, via restriction to $z$, with the subring of $S^{W_L}_{\bbQ}$
consisting of those $f$ such that
\begin{equation}\label{eqn:cong}
v^{-1} \cdot f \equiv w^{-1} \cdot f \quad (\modu \chi)
\end{equation}
for all $v$, $w$ and $\chi$ as above. By Lemma \ref{lem:fix} again, it
suffices to check the congruences (\ref{eqn:cong}) when $v = 1$. Then
either we are in case (ii) of that lemma, and $w = s_\alpha$, or we
are in case (iii) and $w = s_{\alpha} s_{\theta(\alpha)}$. In the
former case, (\ref{eqn:cong}) is equivalent to the congruence
$$
f \equiv s_{\alpha} \cdot f \quad (\modu \alpha),
$$
which holds for any $f \in S_{\bbQ}$. In the latter case, we obtain
\begin{equation}\label{eqn:scong}
f \equiv s_{\alpha} s_{\theta(\alpha)} \cdot f \quad 
(\modu \alpha - \theta(\alpha)).
\end{equation}
Thus, $A^*_G(X)_{\bbQ}$ is identified with the subring of $S^{W_L}_{\bbQ}$
defined by the congruences (\ref{eqn:scong}) for all 
$\alpha \in \Delta_G \setminus \Delta_L$.

On the other hand, we may apply the same localization theorem to the
$T$-variety $Y$. Taking invariants of $W_K$ and using the exact
sequence (\ref{eqn:exa}), we see that $A^*_T(Y)^{W_K}_{\bbQ}$ may be
identified with the same subring of $S_{\bbQ}$, by restricting to the
same point $z$. This implies our statement.
\end{proof}

\subsection{Further developments}
\label{subsec:fd}

We will obtain a more precise description of the ring
$A^*_T(Y)^{W_K}_{\bbQ}$ that occurs in Theorem \ref{thm:eq}. This
will not be used in the sequel of this article, but has its own
interest.

\begin{proposition}\label{prop:dec}
{\rm (i)} We have compatible isomorphisms of graded rings
$$
A^*_T(Y)^{W_K}_{\bbQ} \cong (S_{T_K}^{W_L} \otimes 
A^*_{T/T_K}(Y))^{W_{G/K}}_{\bbQ}
$$
and
$$
A^*_T(pt)^{W_K}_{\bbQ} \cong S^{W_K}_{T,\bbQ}
\cong (S_{T_K}^{W_L} \otimes S_{T/T_K})^{W_{G/K}}_{\bbQ}.
$$

\smallskip

\noindent
{\rm (ii)} The image in $A^*_G(X)_{\bbQ} \cong A^*_T(Y)^{W_K}_{\bbQ}$
of the subring
$$
A^*_K(pt)_{\bbQ} \cong S_{T_K,\bbQ}^{W_K} \cong 
(S_{T_K}^{W_L} \otimes \bbQ)^{W_{G/K}}
\subseteq A^*_T(pt)^{W_K}_{\bbQ}
$$
is mapped isomorphically to $A^*_G(G/K)_{\bbQ} \cong A^*_K(pt)_{\bbQ}$
under the pull-back from $X$ to the open orbit $G/K$.

\smallskip

\noindent
{\rm (iii)} We have isomorphisms
\begin{equation}\label{eqn:is}
\Pic(X)_{\bbQ} \cong \Pic_G(X)_{\bbQ} \cong 
\Pic_T(Y)^{W_K}_{\bbQ} \cong \Pic_{T/T_K}(Y)^{W_K}_{\bbQ}
\end{equation}
that identify the class $[X_i]$ of any boundary divisor with 
\begin{equation}\label{eqn:sum}
[X_i \cap Y]_{T/T_K} = \sum_{w \in W_{G/K}^i} [Y_{i,w}]_{T/T_K}
\end{equation}
where $Y_{i,w}$ denote the boundary divisors of $Y$, indexed as in
Subsec.~\ref{subsec:tv}.
\end{proposition}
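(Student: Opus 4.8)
The plan is to establish (i) first, since (ii) and (iii) then follow from it together with the identifications already recorded, and to work throughout in degree $1$ only for (iii).

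For (i), the starting point is that $T_K \subseteq L \cap K$ fixes $Y$ pointwise, so the $T$-action on $Y$ factors through $T/T_K$. Over $\bbQ$ the isogeny $T_K \times (T/T_K) \to T$ induces $S_{T,\bbQ} \cong S_{T_K,\bbQ} \otimes S_{T/T_K,\bbQ}$, and since $T_K$ acts trivially on $Y$ I would invoke the Künneth-type identity $A^*_T(Y)_{\bbQ} \cong S_{T_K,\bbQ} \otimes A^*_{T/T_K}(Y)_{\bbQ}$. It then remains to take $W_K$-invariants compatibly with this tensor decomposition. Here I would use the exact sequence (\ref{eqn:exa}) together with the fact that $W_L = C_{W_G}(A)$ acts trivially both on $\cX(A)_{\bbQ} = \cX(T/T_K)_{\bbQ}$ and on $Y$: taking $W_L$-invariants affects only the first tensor factor, yielding $S_{T_K}^{W_L} \otimes A^*_{T/T_K}(Y)$, after which one takes invariants under $W_{G/K} = W_K/W_L$. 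The ``point'' version is the special case $Y = pt$, and compatibility of the two isomorphisms is naturality in $Y \to pt$.

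For (ii), I would identify $A^*_G(G/K)_{\bbQ} \cong A^*_K(pt)_{\bbQ} \cong S_{T_K,\bbQ}^{W_K}$ via the standard isomorphism for the associated bundle $G \times_K pt = G/K$. The key observation is that the restriction $A^*_T(Y)_{\bbQ} \to A^*_T(Y \cap G/K)_{\bbQ} = A^*_T(T/T_K)_{\bbQ} \cong S_{T_K,\bbQ}$ corresponds, under the decomposition of (i), to the augmentation $A^*_{T/T_K}(Y) \to A^0 = \bbQ$, because $T/T_K$ acts freely and transitively on its own orbit $T/T_K$, so that $A^*_{T/T_K}(T/T_K) = \bbQ$. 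Taking $W_K$-invariants identifies the pull-back $A^*_G(X) \to A^*_G(G/K)$ with this augmentation; since the latter is the identity on the subring of ``constants in the $Y$-direction'', namely $S_{T_K}^{W_L} \otimes \bbQ$ invariants $= S_{T_K}^{W_K} = A^*_K(pt)_{\bbQ}$, the claim follows.

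For (iii), everything happens in degree $1$. The isomorphism $\Pic(X)_{\bbQ} \cong \Pic_G(X)_{\bbQ}$ holds because $A^1_G(pt) = \cX(G)_{\bbQ} = 0$ for $G$ semisimple, so $\varphi_G$ is injective in degree $1$; the middle isomorphism is the degree-$1$ part of Theorem \ref{thm:eq}; and $\Pic_T(Y)^{W_K}_{\bbQ} \cong \Pic_{T/T_K}(Y)^{W_K}_{\bbQ}$ is the degree-$1$ part of (i), the extra contribution $\cX(T_K)^{W_K}_{\bbQ} = \cX(K)_{\bbQ}$ vanishing since $K$ is semisimple. For the divisor identity, I would restrict $[X_i]$ to $Y$: by the local structure of Subsec.~\ref{subsec:wc}, $Y$ meets $X_i$ transversally, so $[X_i]|_Y = [X_i \cap Y]$ with reduced structure. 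In $Y_0 \cong \bbA^r$ the intersection $X_i \cap Y_0$ is the $i$-th coordinate hyperplane, i.e. $Y_{i,e}$; as $X_i$ is $G$-stable and $Y = W_{G/K}\cdot Y_0$, the full intersection $X_i \cap Y$ is the $W_{G/K}$-orbit of $Y_{i,e}$, namely $\sum_{w \in W_{G/K}^i} Y_{i,w}$, each with multiplicity one. I expect the main obstacle to lie in the careful bookkeeping of the $W_K$-action across the tensor factors in (i), valid only after $\otimes\bbQ$, and in pinning down the geometric input for (iii) (transversality and the exact orbit matching of boundary divisors); both, however, rest on results already available, so the arguments should be essentially formal once these are invoked.
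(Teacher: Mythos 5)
Your proposal follows essentially the same route as the paper: the K\"unneth-type identity you invoke for (i) is exactly the paper's Lemma \ref{lem:rel} (proved there integrally, via the mixed-quotient construction, as $A^*_T(Z) \cong S_T \otimes_{S_{T/T'}} A^*_{T/T'}(Z)$), your treatment of (ii) via restriction to $(G/K) \cap Y = T/T_K$ is the paper's commutative square, and your transversality/orbit argument for $X_i \cap Y$ is a fleshed-out version of the paper's multiplicity-one decomposition. The only point to add is that for $\Pic(X)_{\bbQ} \cong \Pic_G(X)_{\bbQ}$ injectivity (from $\cX(G)=0$) is not enough: you also need surjectivity over $\bbQ$, which the paper obtains from the fact that some positive tensor power of any invertible sheaf on $X$ admits a $G$-linearization, unique because $G$ is semisimple.
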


\begin{proof}
(i) Lemma \ref{lem:rel} below yields a $W_K$-equivariant isomorphism
of graded $S_T$-algebras 
$$
A^*_T(Y) \cong S_T \otimes_{S_{T/T_K}} A^*_{T/T_K}(Y),
$$
where $W_K$ acts on $S_T$ via its action on $T$, and on
$A^*_{T/T_K}(Y)$ via its compatible actions on $T/T_K$ and $Y$. Moreover, 
$\cX(T)_{\bbQ} \cong \cX(T_K)_{\bbQ} \oplus \cX(T/T_K)_{\bbQ}$ 
as $W_K$-modules, so that 
$$
S_{T,\bbQ} \cong S_{T_K,\bbQ} \otimes S_{T/T_K, \bbQ}
$$
as graded $W_K$-algebras. It follows that 
$$
A^*_T(Y)_{\bbQ} \cong S_{T_K,\bbQ} \otimes A^*_{T/T_K}(Y)_{\bbQ}
$$
as graded $S_{T,\bbQ}$-$W_K$-algebras. Taking $W_K$-invariants and
observing that the action of $W_L \subseteq W_K$ on the right-hand
side fixes pointwise $A^*_{T/T_K}(Y)_{\bbQ}$, we obtain the desired
isomorphisms in view of the exact sequence (\ref{eqn:exa}).

(ii) Since $(G/K) \cap Y = T/T_K$, we obtain a commutative square
$$
\CD
A^*_G(X) @>{r}>> A^*_T(Y)^{W_K} \\
@VVV @V{t}VV \\
A^*_G(G/K) @>{s}>> A^*_T(T/T_K)^{W_K} \\
\endCD
$$
where the vertical arrows are pull-backs, and $s$ is defined
analogously to $r$. Moreover, $A^*_G(G/K) \cong A^*_K(pt)$, 
$A^*_T(T/T_K) \cong A^*_{T_K}(pt)$, and this identifies $s_{\bbQ}$
with the isomorphism 
$A^*_K(pt)_{\bbQ} \to S_{T_K,\bbQ}^{W_K}$. Likewise, $t_{\bbQ}$ is
identified with the map 
$$
(S_{T_K}^{W_L} \otimes A^*_{T/T_K}(Y))^{W_{G/K}}_{\bbQ}
\to 
(S_{T_K}^{W_L} \otimes \bbQ)^{W_{G/K}}
$$
induced by the natural map $ A^*_{T/T_K}(Y)_{\bbQ} \to \bbQ$. 
This implies our assertion.

(iii) Let $\cL$ be an invertible sheaf on $X$, then some positive
tensor power $\cL^n$ admits a $G$-linearization, and such a 
linearization is unique since $G$ is semi-simple. This implies the
first isomorphism of (\ref{eqn:is}). The second isomorphism is a
consequence of Theorem \ref{thm:eq}. To show the third isomorphism,
recall that $\Pic^T(Y) \cong A^1_T(Y)$, so that
$$
\Pic_T(Y)^{W_K}_{\bbQ} \cong 
(\cX(T_K)^{W_L} \oplus \Pic_{T/T_K}(Y))^{W_{G/K}}_{\bbQ}
\cong 
\cX(T_K)^{W_K}_{\bbQ} \oplus \Pic_{T/T_K}(Y)^{W_{G/K}}_{\bbQ}
$$
by (i); moreover, $\cX(T_K)^{W_K}_{\bbQ} =0$ since the group $K$ is
semi-simple. Finally, (\ref{eqn:sum}) follows from the decomposition 
of $X_i \cap Y$ into irreducible components $Y_{i,w}$, each of
them having intersection multiplicity one. 
\end{proof}

\begin{lemma}\label{lem:rel}
Let $Z$ be a nonsingular variety carrying an action of a torus $T$ and
let $T' \subset T$ be a closed subgroup acting trivially on $Z$. Then 
there is a natural isomorphism of graded $S_T$-algebras
$$
A^*_T(Z) \cong S_T \otimes_{S_{T/T'}} A^*_{T/T'}(Z),
$$
where $S_{T/T'}$ is identified with a subring of $S_T$ via the
inclusion of $\cX(T/T')$ into $\cX(T)$.

In particular, if $T'$ is finite then there is a natural isomorphism
of graded algebras over $S_{T,\bbQ} \cong S_{T/T',\bbQ}$:
$$
A^*_T(Z)_{\bbQ} \cong A^*_{T/T'}(Z)_{\bbQ}.
$$
\end{lemma}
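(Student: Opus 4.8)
The plan is to prove the displayed isomorphism by reducing, via the descent property of equivariant Chow groups for free quotients, to a base-change statement along the inclusion $S_{T/T'}\hookrightarrow S_T$. Throughout I use the presentation of $A^*_H(Z)$ by algebraic Borel constructions (Totaro, Edidin--Graham), valid degree by degree: fixing a degree, I choose an open $T$-stable subset $U$ of a $T$-representation on which $T$ acts freely and whose complement has large codimension, so that $A^*_T(Z)\cong A^*_T(Z\times U)\cong A^*\big((Z\times U)/T\big)$ in that degree.

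First I would record the basic structures. Since $\cX(T/T')=\{\chi\in\cX(T)\mid \chi\vert_{T'}=1\}$ is a subgroup of the free group $\cX(T)$, the quotient $T/T'$ is again a torus and $S_{T/T'}=\operatorname{Sym}\cX(T/T')$ is the subring of $S_T$ indicated in the statement. As $T'$ acts trivially on $Z$, the $T$-action factors through $T/T'$, so $A^*_{T/T'}(Z)$ is defined and pull-back along the quotient furnishes a canonical homomorphism $A^*_{T/T'}(Z)\to A^*_T(Z)$ of graded $S_{T/T'}$-algebras; combined with the $S_T$-algebra structure this yields the canonical graded $S_T$-algebra map
\[
\Psi: S_T\otimes_{S_{T/T'}}A^*_{T/T'}(Z)\to A^*_T(Z),
\]
and the whole problem is to show that $\Psi$ is an isomorphism. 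Naturality in $Z$ is then automatic from this canonical description.

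Next comes the geometric heart. With $U$ as above, $T'$ acts freely on $Z\times U$ (freely already on $U$), so the descent property \cite{EG98} identifies $A^*_T(Z)\cong A^*_T(Z\times U)$ with $A^*_{T/T'}\big((Z\times U)/T'\big)=A^*_{T/T'}(Z\times U/T')$. Here $U/T'$ carries a free $T/T'$-action with quotient $U/T$, hence is a $T/T'$-torsor over $U/T$ satisfying $A^*_{T/T'}(U/T')\cong A^*(U/T)\cong S_T$ in the given degree; the classifying map of this torsor induces precisely the inclusion $S_{T/T'}\hookrightarrow S_T$ on Chow rings. Thus $A^*_T(Z)$ is the Chow ring of the fibre product of the $(T/T')$-Borel construction of $Z$ with $U/T$, i.e. the base change of $A^*_{T/T'}(Z)$ along $S_{T/T'}\hookrightarrow S_T$.

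Finally I would identify this base change with the tensor product. Splitting $T/T'\cong\bbG_m^d$, the torsor $U/T'\to U/T$ is a product of $\bbG_m$-torsors, each the complement of the zero section of a line bundle; applying the projective bundle theorem to the associated $\bbP^1$-bundles (equivalently, Leray--Hirsch with the Chern classes of the character line bundles, which extend globally because $\cX(T)\to\cX(T')$ is surjective) gives $A^*_{T/T'}(Z\times U/T')\cong A^*_{T/T'}(Z)\otimes_{S_{T/T'}}S_T$ compatibly with $\Psi$. The main obstacle is exactly this last step: making the base-change isomorphism for Chow groups precise and compatible with the ring structures, since $S_{T/T'}\hookrightarrow S_T$ need not be flat when $T'$ has a finite part, so one must argue along the torsor tower rather than invoke flat base change blindly. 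For the finite case this difficulty evaporates rationally: then $\cX(T/T')$ has finite index in $\cX(T)$, whence $S_{T/T',\bbQ}=S_{T,\bbQ}$, the tensor product collapses, and $\Psi_{\bbQ}$ becomes the asserted isomorphism $A^*_{T/T'}(Z)_{\bbQ}\cong A^*_T(Z)_{\bbQ}$.
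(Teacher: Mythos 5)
Your proposal is correct in outline and constructs the same canonical comparison map as the paper: you both use the descent identification $A^n_T(Z)\cong A^n_{T/T'}(Z\times U/T')$ coming from the free $T'$-action on $Z\times U$, followed by pull-back along $Z\times U/T'\to Z$. Where you genuinely diverge is at the decisive step of proving that the induced map $S_T\otimes_{S_{T/T'}}A^*_{T/T'}(Z)\to A^*_T(Z)$ is bijective. The paper settles this in one line by invoking the presentation of the equivariant Chow group of a torus action by invariant cycles, with relations given by divisors of rational eigenfunctions (\cite[Thm.~2.1]{Br97}): since $T'$ acts trivially, the generators for $T$ and for $T/T'$ coincide and all eigenfunction weights lie in $\cX(T/T')$, so the two presentations differ exactly by base change along $S_{T/T'}\hookrightarrow S_T$. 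You instead compute directly along the torsor tower, splitting $T/T'\cong\bbG_m^d$, writing $U/T'\to U/T$ as a product of complements of zero sections of line bundles, and peeling off one $\bbG_m$-torsor at a time by excision and the projective bundle formula. This is a valid, more self-contained route, but two points should be made explicit to close it: (a) you need the K\"unneth-type isomorphism $A^*(W\times U/T)\cong A^*(W)\otimes_{\bbZ}A^*(U/T)$, which holds because $U$ may be chosen so that $U/T$ is a product of projective spaces (projective bundle formula for trivial bundles); and (b) the first Chern classes of the $d$ splitting line bundles on the relevant base must be identified with the elements $\chi_i\otimes 1-1\otimes\chi_i$, which is precisely what converts the successive excision quotients into the tensor product over $S_{T/T'}$. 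As a side remark, your concern about non-flatness of $S_{T/T'}\hookrightarrow S_T$ is unfounded: choosing a basis of $\cX(T)$ adapted to the subgroup $\cX(T/T')$ via Smith normal form exhibits $S_T$ as a free $S_{T/T'}$-module; but your argument rightly does not rely on flatness in any case.
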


\begin{proof}
We begin by constructing a morphism of graded $S_{T/T'}$-algebras
$$
f: A^*_{T/T'}(Z) \to A^*_T(Z)
$$
such that $f([Y]_{T/T'}) =  [Y]_T$ for any $T$-stable subvariety 
$Y \subseteq Z$. 

For this, we work in a fixed degree $n$ and consider a pair $(V,U)$,
where $V$ is a finite-dimensional $T$-module, $U \subset V$ is a
$T$-stable open subset such that the quotient $U \to U/T$ is a
principal $T$-bundle, and the codimension of $V \setminus U$ is
sufficiently large; see \cite{EG98} for details. Then we can form the
mixed quotient 
$$
Z \times^T U := (Z \times U)/T,
$$ 
and we obtain
\begin{equation}\label{eqn:isom}
A^n_T(Z) = A^n(Z \times^T U) \cong A^n(Z \times^{T/T'} U/T')
\cong A^n_{T/T'}(Z \times U/T'),
\end{equation}
where the latter isomorphism follows from the freeness of the
diagonal $T/T'$-action on $Z \times U/T'$. Composing (\ref{eqn:isom})
with the pull-back under the projection 
$Z \times U/T' \to Z$ yields a morphism
$$
f_n: A^n_{T/T'}(Z) \to A^n_T(Z).
$$
One may check as in \cite{EG98} that $f_n$ is independent of the
choice of $(V,U)$, and hence yields the desired morphism $f$. 

Using the description of the $S_{T/T'}$-module $A^*_{T/T'}(Z)$
and of the $S$-module $A^*_T(Z)$ in terms of invariant cycles
(see \cite[Thm.~2.1]{Br97}), we obtain an isomorphism of graded
$S_T$-modules 
$$
\id \otimes f : S_T \otimes_{S_{T/T'}} A^*_{T/T'}(Z) \to A^*_T(Z).
$$
\end{proof}

Next, we show that $A^*_G(X)_{\bbQ}$ is a free module over a big
polynomial subring:

\begin{proposition}\label{prop:hsp}
Let $R$ denote the $\bbQ$-subalgebra of $A^*_G(X)_{\bbQ}$ generated by
the image of $A^*_K(pt)$ (defined in Proposition \ref{prop:dec}(ii))
and by the equivariant classes $[X_1]_G,\ldots,[X_r]_G$ of the
boundary divisors. Then $R$ is a graded polynomial ring, and the
$R$-module $A^*_G(X)_{\bbQ}$ is free of rank $\vert W_{G/K} \vert$.
\end{proposition}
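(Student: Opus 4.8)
The plan is to derive the statement from the standard fact that a finitely generated graded Cohen--Macaulay algebra over $\bbQ$ is free over any graded polynomial subring generated by a homogeneous system of parameters (hence the label of the proposition). Write $A := A^*_G(X)_{\bbQ}$. I would first record the bookkeeping: by Chevalley's theorem the image $A^*_K(pt)_{\bbQ} \cong S_{T_K}^{W_K}$ is polynomial on $\rk(K)$ generators, to which we adjoin the $r$ boundary classes $[X_1]_G,\ldots,[X_r]_G$; the minimal rank identity $\rk(G) = \rk(K) + r$ shows that $R$ is generated by exactly $\rk(G)$ homogeneous elements. On the other hand $A = A^*_T(Y)^{W_K}_{\bbQ}$ is finite over $S_{T,\bbQ}$, so $\dim A = \rk(G)$, matching the generator count. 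Thus it suffices to prove (a) that $A$ is Cohen--Macaulay, and (b) that the generators of $R$ form a homogeneous system of parameters; granting these, $A$ is free over $R$, the subring $R$ is forced to be a genuine polynomial ring (a quotient of $\bbQ[x_1,\ldots,x_{\rk(G)}]$ of full Krull dimension is the polynomial ring itself), and the rank equals $\dim_{\bbQ} A/R^+A$.

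Part (a) is the easy half. The toric variety $Y$ is smooth and complete, so $A^*_{T/T_K}(Y)$ is the Stanley--Reisner ring (face ring) of the fan of Weyl chambers and is free over $S_{T/T_K}$. By Lemma \ref{lem:rel} the ring $A^*_T(Y)_{\bbQ}$ is then free over the polynomial ring $S_{T,\bbQ}$, hence Cohen--Macaulay; since we work in characteristic $0$ and $W_K$ is finite, the invariant ring $A = A^*_T(Y)^{W_K}_{\bbQ}$ is again Cohen--Macaulay by the Hochster--Eagon theorem.

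Part (b) is where the real work lies. Using the isomorphism $A \cong (S_{T_K}^{W_L}\otimes A^*_{T/T_K}(Y))^{W_{G/K}}_{\bbQ}$ of Proposition \ref{prop:dec}(i), under which $[X_i]_G$ corresponds to $\sigma_i := \sum_{w\in W_{G/K}^i}[Y_{i,w}]_{T/T_K}$ by (\ref{eqn:sum}), it suffices to prove that $A^*_{T/T_K}(Y)$ is finite over $\bbQ[\sigma_1,\ldots,\sigma_r]$. Indeed $S_{T_K}^{W_L}$ is finite over $S_{T_K}^{W_K}$, so $S_{T_K}^{W_L}\otimes A^*_{T/T_K}(Y)$ is finite over $R = S_{T_K}^{W_K}\otimes\bbQ[\sigma_1,\ldots,\sigma_r]$, and $A$, being an $R$-submodule of this finite module over the Noetherian ring $R$, is itself finite over $R$. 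To establish the toric finiteness I would argue on $\Spec A^*_{T/T_K}(Y)$: as the spectrum of a Stanley--Reisner ring it is the union over the chambers $w \in W_{G/K}$ of the coordinate subspaces $\bbA^r_w$ spanned by the rays $w\omega_1^\vee,\ldots,w\omega_r^\vee$. On the chart $\bbA^r_w$ every boundary coordinate not indexing a ray of that chamber vanishes, so $\sigma_i$ restricts to the single coordinate attached to $w\omega_i^\vee$; hence the $\sigma_1,\ldots,\sigma_r$ restrict to the $r$ coordinate functions on each $\bbA^r_w$, and their common zero locus is the vertex. Consequently $(\sigma_1,\ldots,\sigma_r)$ has finite codimension, which is the key point and the only genuine obstacle in the argument.

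Granting (a) and (b), $A$ is free over the polynomial ring $R$ of rank $\dim_{\bbQ} A/R^+A$. Since $W_{G/K}$-invariants are exact in characteristic $0$ and the generators of $R^+$ are $W_{G/K}$-invariant, this fiber is $(C\otimes D)^{W_{G/K}}$, where $C := S_{T_K}^{W_L}/S_{T_K}^{W_K,+}S_{T_K}^{W_L}$ and $D := A^*_{T/T_K}(Y)/(\sigma_1,\ldots,\sigma_r)$. Here $C$ is the relative coinvariant algebra of $W_L \trianglelefteq W_K$, which is isomorphic as a $W_{G/K}$-module to the regular representation $\bbQ[W_{G/K}]$. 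Moreover, since $A^*_{T/T_K}(Y)$ is free over $S_{T/T_K}$ of rank equal to the number $|W_{G/K}|$ of maximal cones, and both $S_{T/T_K}$ and $\bbQ[\sigma_1,\ldots,\sigma_r]$ are generated in degree one, reduction modulo the $\sigma_i$ gives $\dim_{\bbQ} D = |W_{G/K}|$. Finally $(\bbQ[W_{G/K}]\otimes D)^{W_{G/K}} \cong D$, so the rank is $|W_{G/K}|$, as asserted.
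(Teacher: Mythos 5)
Your proof is correct, and its skeleton --- Cohen--Macaulayness of $A := A^*_G(X)_{\bbQ}$ plus the fact that the $\rk(K)+r=\rk(G)$ generators of $R$ form a homogeneous system of parameters, hence freeness over $R$ and the identification of $R$ with a polynomial ring by a Krull dimension count --- is exactly the paper's. Where you differ is in how the two ingredients are obtained, and in how much you actually write out. The paper gets Cohen--Macaulayness from the freeness of $A^*_G(X)_{\bbQ}$ over $A^*_G(pt)_{\bbQ}$ of rank $\vert W_G : W_L\vert$, which follows from \cite[6.7 Corollary]{Br97} together with Lemma \ref{lem:fix}; you get it instead from the face-ring description of $A^*_{T/T_K}(Y)$, Lemma \ref{lem:rel}, and Hochster--Eagon for the $W_K$-invariants --- both routes are sound. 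For finiteness of $A$ over $R$ the paper simply cites \cite[Lemma 6]{BP02} and asserts that the argument adapts; your component-by-component analysis of $\Spec$ of the Stanley--Reisner ring (each $\sigma_i$ restricts to the single coordinate attached to $w\omega_i^{\vee}$ on the component of the chamber $w\cdot C_+$, because the orbits $W_{G/K}\omega_i^{\vee}$ are pairwise disjoint and each chamber contains exactly one ray from each orbit) is precisely that adaptation, written out, and is the genuine content of the proposition. Likewise the paper leaves the rank to ``adapting the Poincar\'e series arguments of \cite{BP02}'', whereas you execute it via the relative coinvariant algebra $C \cong \bbQ[W_{G/K}]$ (using normality of $W_L$ in $W_K$ from (\ref{eqn:exa})) and the untwisting isomorphism $(\bbQ[W]\otimes D)^W\cong D$. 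Two small imprecisions, neither fatal: $S_{T,\bbQ}$ is not a subring of $A = A^*_T(Y)^{W_K}_{\bbQ}$, so the dimension count should run through $S_{T,\bbQ}^{W_K} \cong A^*_G(pt)_{\bbQ}$, over which $A$ is indeed finite; and the equality $\dim_{\bbQ} D = \vert W_{G/K}\vert$ deserves one explicit sentence comparing the two Hilbert-series expressions $P(t)/(1-t)^r$ obtained from freeness of $A^*_{T/T_K}(Y)_{\bbQ}$ over $S_{T/T_K,\bbQ}$ and over $\bbQ[\sigma_1,\ldots,\sigma_r]$, both polynomial subrings on $r$ degree-one generators.
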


\begin{proof}
By \cite[6.7 Corollary]{Br97} and Lemma \ref{lem:fix},
$A^*_G(X)_{\bbQ}$ is a free module over $A^*_G(pt)_{\bbQ}$ of rank
being the index $\vert W_G : W_L \vert$. As a consequence, the ring
$A^*_G(X)_{\bbQ}$ is Cohen--Macaulay of dimension $\rk(G)$. 

On the other hand, the $R$-module $A^*_G(X)_{\bbQ}$ is finite by
\cite[Lemma 6]{BP02} (the latter result is proved there in the setting
of equivariant cohomology, but the arguments may  be readily adapted
to equivariant intersection theory). 

Since $R$ is a quotient of a polynomial ring in $\rk(K) + r = \rk(G)$
variables, it follows that $R$ equals this polynomial ring. Moreover,
$A^*_G(X)_{\bbQ}$ is a free $R$-module, since it is
Cohen--Macaulay. This proves all assertions except that on the rank of
the $R$-module $A^*_G(X)_{\bbQ}$, which may be checked by adapting the
Poincar\'e series arguments of \cite{BP02}.
\end{proof}

\section{Equivariant Chern classes}
\label{sec:ecc}

\subsection{The normal bundle of the associated toric variety}
\label{subsec:nb}
We maintain the notation and assumptions of
Subsec.~\ref{subsec:fpc}. In particular, $X$ denotes a wonderful
symmetric variety of minimal rank, with associated toric variety $Y$.

Let $\cN_{Y/X}$ denote the normal sheaf to $Y$ in $X$. This is
a $LN_K$-linearized locally free sheaf on $Y$, which fits into
an exact sequence of such sheaves
\begin{equation}\label{eqn:tn}
0 \to \cT_Y \to \cT_X\vert_Y \to \cN_{Y/X} \to 0.
\end{equation}
Here $\cT_X$ denotes the tangent sheaf to $X$ (this is a
$G$-linearized locally free sheaf on $X$) and $\cT_X\vert_Y$
denotes its pull-back to $Y$.

The action of $G$ on $X$ yields a morphism of $G$-linearized sheaves 
\begin{equation}\label{eqn:op}
op_X: \cO_X \otimes \fg  \to \cT_X,
\end{equation}
where $\fg$ denotes the Lie algebra of $G$. In turn, this yields a
morphism of $T$-linearized sheaves
$\cO_Y \otimes \fg  \to \cN_{Y/X}$
which factors through another such morphism 
\begin{equation}\label{eqn:gg}
\varphi : \cO_Y \otimes \fg/\fl  \to \cN_{Y/X}
\end{equation}
(where $\fl$ denotes the Lie algebra of $L$), since $Y$ is stable
under $L$. Also, note the isomorphism of $T$-modules
$$
\fg/\fl \cong \bigoplus_{\alpha \in \Phi_G \setminus \Phi_L} \fg_{\alpha}.
$$
We may now formulate a splitting theorem for $\cN_{Y/X}$:

\begin{theorem}\label{thm:split}
{\rm (i)} We have a decomposition of $T$-linearized sheaves
\begin{equation}\label{eqn:split}
\cN_{Y/X} = \bigoplus_{\beta \in \Phi_K \setminus \Phi_L} \cL_{\beta}
\end{equation}
where each $\cL_{\beta}$ is an invertible sheaf on which $T_K$
acts via its character $\beta$. The action of $N_K$ on
$\cN_{Y/X}$ permutes the $\cL_{\beta}$'s according to the action
of $W_K$ on $\Phi_K \setminus \Phi_L$.

\noindent
{\rm (ii)} The map (\ref{eqn:gg}) restricts to surjective maps
$$
\varphi_{\beta}: \cO_Y \otimes 
(\fg_{\alpha} \oplus \fg_{\theta(\alpha)}) \to \cL_{\beta} \quad  
(\beta =q(\alpha), ~\alpha \in \Phi_G^+ \setminus \Phi^+_L)
$$
which induce isomorphisms of $T$-modules
$$
\Gamma(Y, \cL_{\beta}) \cong \fg_{\alpha} \oplus \fg_{\theta(\alpha)}.
$$
In particular, $\varphi$ is surjective, and each invertible sheaf
$\cL_{\beta}$ is generated by its global sections. Moreover, the
corresponding morphism 
$$
F_\beta: Y \to \bbP(\fg_{\alpha} \oplus \fg_{\theta(\alpha)})^* 
\cong \bbP^1
$$
equals the morphism $f_{\alpha - \theta(\alpha)}$ (defined in
Prop.~\ref{prop:prod}).
\end{theorem}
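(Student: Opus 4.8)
The plan is to prove (i) by exploiting the fiberwise action of $T_K$, and (ii) by a local computation on the chart $Y_0$ combined with Proposition \ref{prop:prod}. For (i), note that $T_K \subseteq L \cap K$ acts trivially on $Y$ while $\cN_{Y/X}$ is $T_K$-linearized; hence it decomposes canonically into the direct sum of its $T_K$-eigensheaves $\cN^\chi$ ($\chi \in \cX(T_K)$), each a locally free direct summand of locally constant rank. As $Y$ is connected, it suffices to compute the fiber at the single $T$-fixed point $z \in Y_0$. By the transversality description in Subsec.~\ref{subsec:wc}, the normal space $\cN_{Y/X,z}$ is identified with the tangent space to the closed orbit $G\cdot z$, whose $T$-weights are exactly the roots $\alpha\in\Phi^+_G\setminus\Phi^+_L$, each with multiplicity one. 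By Lemma \ref{lem:res}(i), $q$ carries this set bijectively onto $\Phi_K\setminus\Phi_L$, so each $\beta\in\Phi_K\setminus\Phi_L$ occurs as a $T_K$-weight of $\cN_{Y/X,z}$ with multiplicity one and no other weight occurs. Thus $\cN^\beta=:\cL_\beta$ has rank one everywhere, the remaining $\cN^\chi$ vanish, and $T_K$ acts on $\cL_\beta$ through $\beta$, giving (\ref{eqn:split}). Finally, for $n\in N_K$ lifting $w\in W_K$, conjugating the $T_K$-action shows $n\cdot\cL_\beta=\cL_{w\beta}$, the asserted permutation (well defined by Lemma \ref{lem:res}(ii)).

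For (ii) I would first compute $\varphi$ over $Y_0$ using the local structure isomorphism $\iota:P_u\times Y_0\to X_0$ of (\ref{eqn:ls}). It identifies $\cN_{Y/X}|_{Y_0}$, $T_K$-equivariantly, with the trivial bundle $\cO_{Y_0}\otimes\fp_u$; and since the $P_u$-action on $X_0$ is left translation on the first factor, for $\xi\in\fp_u$ the fundamental vector field $op_X(\xi)$ at a point of $Y_0$ points in the normal $\fp_u$-direction with value $\xi$. Hence $\varphi|_{Y_0}$ is the identity on the summand $\cO_{Y_0}\otimes\fp_u$ of $\cO_{Y_0}\otimes\fg/\fl$. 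Decomposing the source by $T_K$-weights and using Lemma \ref{lem:fi} (so $q^{-1}(\beta)=\{\alpha,\theta(\alpha)\}$), the map $\varphi$ restricts to $\varphi_\beta$; over $Y_0$ the summand $\fg_\alpha\subseteq\fp_u$ already maps isomorphically onto $\cL_\beta|_{Y_0}\cong\cO_{Y_0}\otimes\fg_\alpha$, so $\varphi_\beta|_{Y_0}$ is surjective for every $\beta$. Since the charts $w\cdot Y_0$ ($w\in W_{G/K}$) cover $Y$ by (\ref{eqn:ywy}) and $N_K$ permutes the $\cL_\beta$ as in (i), surjectivity over $Y_0$ transports to surjectivity over all of $Y$. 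Thus $\varphi$ is surjective and each $\cL_\beta$ is generated by the two global sections $s_\alpha$ and $s_{\theta(\alpha)}$, the images of $\fg_\alpha$ and $\fg_{\theta(\alpha)}$.

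It remains to identify the morphism $F_\beta$ and to count sections. The sections $s_\alpha,s_{\theta(\alpha)}$ are $T$-eigenvectors of the distinct weights $\alpha,\theta(\alpha)$; on $Y_0$, $s_\alpha$ is nowhere vanishing and $s_{\theta(\alpha)}/s_\alpha$ is a $T$-eigenfunction of weight $\theta(\alpha)-\alpha=-\gamma$, where $\gamma:=\alpha-\theta(\alpha)$ is, by Lemma \ref{lem:fi}, the restricted root whose reflection is represented by $s_\alpha s_{\theta(\alpha)}$; it is positive and, as $\Phi_{G/K}$ is reduced (Lemma \ref{lem:res}(iii)), indivisible. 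Since $T/T_K$ acts on $Y_0\cong\bbA^r$ with coordinate weights the negatives of the simple restricted roots, $s_{\theta(\alpha)}/s_\alpha$ is the toric monomial of weight $-\gamma$, which is regular precisely because $\gamma$ is a positive restricted root. Hence on the open orbit $T/T_K$ the map $F_\beta=[s_\alpha:s_{\theta(\alpha)}]$ is $t\mapsto[1:\gamma(t)^{-1}]$; comparing with $f_\gamma$ and invoking Proposition \ref{prop:prod}(i) identifies $F_\beta$ with $f_{\alpha-\theta(\alpha)}$ under $\bbP(\fg_\alpha\oplus\fg_{\theta(\alpha)})^*\cong\bbP^1$, so that $\cL_\beta\cong f_\gamma^*\cO_{\bbP^1}(1)$. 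Since $\gamma$ is indivisible, (\ref{eqn:fxp}) gives $(f_\gamma)_*\cO_Y=\cO_{\bbP^1}$, whence the projection formula yields $\Gamma(Y,\cL_\beta)\cong\Gamma(\bbP^1,\cO_{\bbP^1}(1))$, which is two-dimensional; this forces the injective map $\fg_\alpha\oplus\fg_{\theta(\alpha)}\to\Gamma(Y,\cL_\beta)$ to be the asserted isomorphism of $T$-modules.

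The main obstacle I anticipate is the local computation on $Y_0$: pinning down that $\varphi|_{Y_0}$ is the identity on $\fp_u$ (so that $\varphi_\beta$ is surjective with $s_\alpha$ a trivializing section) and controlling the off-diagonal contribution of $\fg_{\theta(\alpha)}$, namely verifying that $s_{\theta(\alpha)}/s_\alpha$ is exactly the monomial of weight $-\gamma$. Once this is settled, the remainder is weight bookkeeping together with the factorization and pushforward statements of Proposition \ref{prop:prod}.
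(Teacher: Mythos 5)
Your proposal is correct and follows essentially the same route as the paper: part (i) via the $T_K$-eigensheaf decomposition of $\cN_{Y/X}$ and a single fiber computation (you use the fixed point $z$ and transversality with the closed orbit, the paper uses the base point of $G/K$ and $\fg/(\ft+\fk)$ — same weight multiset either way), and part (ii) via the local structure isomorphism on $Y_0$, $N_K$-equivariance, Proposition \ref{prop:prod} and the projection formula with (\ref{eqn:fxp}). The only cosmetic difference is that you identify $F_\beta$ with $f_{\alpha-\theta(\alpha)}$ by an explicit ratio of sections and then count dimensions of $\Gamma(Y,\cL_\beta)$, whereas the paper argues directly by equivariance of the restriction to the open orbit; both are valid.
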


\begin{proof}
(i) Since $Y$ is connected and fixed pointwise by $T_K$, each fiber
$\cN_{Y/X}(y)$, $y \in Y$, is a $T_K$-module, independent of the point
$y$. Considering the base point of $G/K$ and denoting by $\ft$
(resp.~$\fk$) the Lie algebra of $T$ (resp.~$K$), we obtain an
isomorphism of $T_K$-modules
$$
\cN_{Y/X}(y) \cong \fg/(\ft + \fk) \cong 
\bigoplus_{\alpha \in \Phi^+_G \setminus \Phi_L^+} \fg_{\alpha}
$$
which yields the decomposition (\ref{eqn:split}) of the normal sheaf
regarded as a $T_K$-linearized sheaf. Since the summands
$\cL_{\beta}$ are exactly the $T_K$-eigenspaces, they are stable
under $T$, and permuted by $N_K$ according to their weights.

(ii) Consider the restriction
$$
\varphi_0 :  \cO_{Y_0} \otimes \fg \to \cN_{Y_0/X_0}.
$$
By the isomorphism (\ref{eqn:ls}), the composite map
\begin{equation}\label{eqn:lt}
\cO_{Y_0} \otimes \fp_u \to \cO_{Y_0} \otimes \fg  \to \cN_{Y_0/X_0}
\end{equation}
is an isomorphism. Thus, $\varphi_0$ is surjective; by
$N_K$-equivariance, it follows that $\varphi$ is surjective as
well. Considering $T_K$-eigenspaces, this implies in turn the 
surjectivity of each $\varphi_{\beta}$.

Thus, the $T$-linearized sheaf $\cL_{\beta}$ is generated by a
$2$-dimensional $T$-module of global sections with weights $\alpha$
and $\theta(\alpha)$. This yields a $T$-equivariant morphism
$F_{\beta}: Y \to \bbP^1$. Its restriction to the open orbit 
$T/T_K$ is equivariant of weight $\alpha - \theta(\alpha)$ for the
action of $T$ by left multiplication; thus, we may identify this
restriction with the character $\alpha - \theta(\alpha)$. 
Now Proposition \ref{prop:prod} implies that 
$F_\beta = f_{\alpha - \theta(\alpha)}$. By (\ref{eqn:fxp}) and the
projection formula, it follows that the map 
$$
\fg_{\alpha} \oplus \fg_{\theta(\alpha)} = 
\Gamma(\bbP^1,\cO_{\bbP^1}(1)) \to 
\Gamma(Y,F_{\beta}^*\cO_{\bbP^1}(1)) = 
\Gamma(Y,\cL_{\beta}) 
$$
is an isomorphism.
\end{proof}

\subsection{The (logarithmic) tangent bundle}
\label{subsec:tb}

Recall that $\cT_X$ denotes the tangent sheaf of $X$, consisting
of all $k$-derivations of $\cO_X$. Let $\cS_X \subseteq \cT_X$
be the subsheaf consisting of derivations preserving the ideal sheaf
of the boundary $\partial X$. Since $\partial X$ is a divisor with
normal crossings, the sheaf $\cS_X$ is locally free; it is called
the \emph{logarithmic tangent sheaf} of the pair $(X,\partial X)$, 
also denoted by $\cT_X (- \log \partial X)$. 

Since $G$ acts on $X$ and preserves $X$, the map $op_X$ of 
(\ref{eqn:op}) factors through a map 
\begin{equation}\label{eqn:opr}
op_{X, \partial X} : \cO_X \otimes \fg \to \cS_X.
\end{equation}
In fact, $op_{X,\partial X}$ is surjective; this follows, e.g.,
from the local structure of $X$, see \cite[Prop.~2.3.1]{BB96} for
details. In other words, $\cS_X$ is the subsheaf of $\cT_X$
generated by the derivations arising from the $G$-action.

Clearly, the sheaf $\cT_X$ is $G$-linearized compatibly
with the subsheaf $\cS_X$. Moreover, the natural maps
$\cT_X \to \cN_{X_i/X} \cong \cO_X(X_i)\vert_{X_i}$
(where $X_1,\ldots,X_r$ denote the boundary divisors)
fit into an exact sequence of $G$-linearized sheaves
\begin{equation}\label{eqn:st}
0 \to \cS_X \to \cT_X \to 
\bigoplus_{i=1}^r \cN_{X_i/X} \to 0,
\end{equation}
see e.g. \cite[Prop.~2.3.2]{BB96}.

The pull-backs of $T_X$ and $\cS_X$ to $Y$ are described by the
following:

\begin{proposition}\label{prop:ex}
{\rm (i)} The exact sequence of $T N_K$-linearized sheaves
$$
0 \to \cT_Y \to \cT_X\vert_Y \to \cN_{Y/X} \to 0
$$
admits a unique splitting. 

{\rm (ii)} We also have a uniquely split exact sequence of
$T N_K$-linearized sheaves 
\begin{equation}\label{eqn:sn}
0 \to \cS_Y \to \cS_X \vert_Y \to
\cN_{Y/X} \to 0, 
\end{equation}
where $\cS_Y$ denotes the logarithmic tangent sheaf of the pair
$(Y,\partial Y)$. Moreover, the $T N_K$-linearized sheaf
$\cS_Y$ is isomorphic to $\cO_Y \otimes \fa$, where $T N_K$
acts on $\fa$ (the Lie algebra of $A$) via the natural action of its 
quotient $W_{G/K}$.  
\end{proposition}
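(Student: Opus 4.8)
The plan is to deduce both splittings from a single observation about $T_K$-weights, treating (i) and (ii) in parallel once the relevant short exact sequences are in place. The key point is that $T_K$ acts trivially on $Y$ (as $Y$ is the toric variety under $T/T_K$), and therefore acts with weight $0$ on the linearized sheaves $\cT_Y$ and $\cS_Y$; by contrast, Theorem \ref{thm:split} exhibits $\cN_{Y/X}=\bigoplus_\beta \cL_\beta$ with only the \emph{nonzero} weights $\beta \in \Phi_K \setminus \Phi_L$. Thus in each sequence the sub-sheaf and the quotient have disjoint $T_K$-weights, so the unique $T_K$-equivariant splitting is just the projection onto $T_K$-isotypic components. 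Since $T$ centralizes $T_K$ while $N_K$ normalizes it (permuting weights and fixing weight $0$), the weight-$0$ summand is $T N_K$-stable and its complement maps isomorphically onto $\cN_{Y/X}$; hence this canonical splitting is automatically $T N_K$-equivariant, and it is the only one even among $T_K$-equivariant splittings.

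For (i) this is already complete: one need only record that the $T N_K$-linearization of $\cT_Y$ has $T_K$-weight $0$, which holds because a group acting trivially on a variety acts trivially on its tangent sheaf. So (i) reduces entirely to the weight comparison above together with the decomposition (\ref{eqn:split}).

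For (ii) I must first produce the exact sequence (\ref{eqn:sn}) and identify its terms. I would obtain the map $\cS_X\vert_Y \to \cN_{Y/X}$ by composing the inclusion $\cS_X\vert_Y \hookrightarrow \cT_X\vert_Y$ with the normal projection of (\ref{eqn:tn}); restricting the surjection $op_{X,\partial X}$ of (\ref{eqn:opr}) to $Y$ and comparing with the map $\varphi$ of (\ref{eqn:gg}) shows this composite is surjective, since $\varphi$ is surjective by Theorem \ref{thm:split}. For the kernel I would use $\partial X \cap Y = \partial Y$, which follows from (\ref{eqn:sum}): a local section of $\cS_X\vert_Y$ lying in $\cT_Y$ is a vector field tangent to $Y$ preserving $\partial X$, hence preserving $\partial Y$, so it lies in $\cS_Y$; conversely the fundamental vector fields of the $A$-action lie in $\cS_X\vert_Y \cap \cT_Y$ (since $A \subseteq G$ preserves both $Y$ and $\partial X$) and already generate $\cS_Y$. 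This simultaneously computes the kernel and yields the identification $\cS_Y \cong \cO_Y \otimes \fa$: it is the standard trivialization of the logarithmic tangent sheaf of a smooth toric variety by its invariant vector fields, with $T$ acting trivially on $\fa=\operatorname{Lie}(A)$ and $N_K$ acting through $W_{G/K}$ on $\fa$. With (\ref{eqn:sn}) in hand and $\cS_Y$ of $T_K$-weight $0$, the weight argument of the first paragraph delivers the unique $T N_K$-splitting.

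The main obstacle I anticipate is the identification of the quotient in (\ref{eqn:sn}) as $\cN_{Y/X}$ itself — equivalently, that the logarithmic normal bundle of $Y$ coincides with the ordinary one — together with the precise computation of the kernel as $\cS_Y$. Both hinge on the transversality encoded in $\partial X \cap Y = \partial Y$ and on the surjectivity of $op_{X,\partial X}$; once these are secured the rest is formal, the two splittings being forced by the disjointness of $T_K$-weights.
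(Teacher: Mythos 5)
Your proof is correct, and the heart of it --- the unique splitting forced by the disjointness of the $T_K$-weights of the sub and quotient sheaves, using that $T_K$ fixes $Y$ pointwise and that $\cN_{Y/X}$ has only the nonzero weights $\beta\in\Phi_K\setminus\Phi_L$ --- is exactly the paper's argument for both (i) and (ii). Where you diverge is in establishing the exact sequence (\ref{eqn:sn}): the paper proves exactness of the complex $\cO_Y\otimes\fa\to\cS_X\vert_Y\to\cN_{Y/X}$ by an explicit local computation on $Y_0$, using the local structure isomorphism (\ref{eqn:ls}) together with \cite[Prop.~2.3.1]{BB96} to write $\cS_X\vert_{Y_0}\cong\cO_{Y_0}\otimes(\fp_u\oplus\fa)$ and then matching the two summands against (\ref{eqn:lt}) and (\ref{eqn:ltb}). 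You instead argue intrinsically: surjectivity of $\pi$ by factoring the boundary-preserving operator map $op_{X,\partial X}$ through $\varphi$ (which is surjective by Theorem \ref{thm:split}), and the kernel by the observation that $\partial X\cap Y=\partial Y$. Both routes work; yours avoids coordinates but buys this at the price of one step you should make precise: a section of $\cS_X\vert_Y=\cS_X\otimes\cO_Y$ is not literally a vector field, so ``lying in $\cT_Y$ and preserving $\partial X$'' should be unpacked by lifting locally to a section $D$ of $\cS_X$, noting that $\pi(D\vert_Y)=0$ means exactly $D(I_Y)\subseteq I_Y$, so that $D$ descends to a derivation of $\cO_Y$ preserving $I_{\partial X}\cdot\cO_Y=I_{\partial Y}$; combined with the fact that $\cO_Y\otimes\fa\to\cS_Y$ is already surjective between locally free sheaves of the same rank, this pins down $\ker\pi\cong\cS_Y$. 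The paper's local computation sidesteps this bookkeeping entirely, which is its main advantage.
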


\begin{proof}
(i) is checked by considering the $T_K$-eigenspaces as in the
proof of Theorem \ref{thm:split}. Specifically, the $T_K$-fixed
part of $\cT_X\vert_Y$ is exactly $\cT_Y$, while the sum
of all the $T_K$-eigenspaces with non-zero weights is mapped
isomorphically to $\cN_{Y/X}$.

(ii) First, note that the natural map 
\begin{equation}\label{eqn:ltb}
\cO_Y \otimes \fa \to \cS_Y
\end{equation}
is an isomorphism, since $Y$ is a nonsingular toric variety under
the torus $A/A \cap K$; see e.g. \cite[Prop.~3.1]{Od88}. 

Next, consider the map $\cT_X\vert_Y \to \cN_{Y/X}$ and
its restriction
$$
\pi : \cS_X\vert_Y \to \cN_{Y/X}.
$$
Clearly, the kernel of $\pi$ contains the image of the natural map
$$
i: \cO_Y \otimes \fa \to \cS_X\vert_Y.
$$
We claim that the resulting complex of $T N_K$-linearized sheaves
\begin{equation}\label{eqn:cplx}
\cO_Y\otimes \fa \to \cS_X\vert_Y \to \cN_{Y/X}
\end{equation}
is exact. By equivariance, it suffices to check this on $Y_0$. Then
the local structure (\ref{eqn:ls}) yields an exact sequence of
$P$-linearized sheaves
$$
0 \to \cO_{X_0} \otimes \fp_u \to
\cS_X\vert_{X_0}
\to \cO_{X_0}\otimes \fa \to 0,
$$
see \cite[Prop.~2.3.1]{BB96}. This yields, in turn, an isomorphism
$$
\cO_{Y_0}\otimes (\fp_u \oplus \fa) \cong
\cS_X\vert_{Y_0}
$$
which implies our claim by using the isomorphisms (\ref{eqn:lt}) and 
(\ref{eqn:ltb}).

In turn, this implies the exact sequence (\ref{eqn:sn}); its splitting
is shown by arguing as in (i).
\end{proof}

\begin{corollary}\label{cor:split}
We have isomorphisms of $T N_K$-linearized sheaves
\begin{equation}
\cT_X\vert_Y \cong \cT_Y \oplus 
\bigoplus_{\beta \in \Phi_K \setminus \Phi_L} \cL_{\beta},
\quad 
\cS_X\vert_Y \cong (\cO_Y\otimes \fa) \oplus 
\bigoplus_{\beta \in \Phi_K \setminus \Phi_L} \cL_{\beta},
\end{equation}
and an exact sequence of $T N_K$-linearized sheaves
\begin{equation}
0 \to \cO_Y \otimes \fa \to \cT_Y \to 
\bigoplus_{j=1}^m \cO_Y(Y_j)\vert_{Y_j} \to 0,
\end{equation}
where $Y_1, \ldots, Y_m$ denote the boundary divisors of the toric
variety $Y$.
\end{corollary}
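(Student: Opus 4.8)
The plan is to obtain the two direct-sum decompositions as immediate consequences of Proposition~\ref{prop:ex} combined with Theorem~\ref{thm:split}(i), and to get the exact sequence as the logarithmic tangent sequence of the smooth toric variety $Y$.

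First I would invoke Proposition~\ref{prop:ex}(i): the sequence $0 \to \cT_Y \to \cT_X\vert_Y \to \cN_{Y/X} \to 0$ splits uniquely as $TN_K$-linearized sheaves, so $\cT_X\vert_Y \cong \cT_Y \oplus \cN_{Y/X}$. Substituting the decomposition $\cN_{Y/X} = \bigoplus_{\beta \in \Phi_K \setminus \Phi_L} \cL_{\beta}$ of Theorem~\ref{thm:split}(i) gives the first isomorphism. Likewise, Proposition~\ref{prop:ex}(ii) yields $\cS_X\vert_Y \cong \cS_Y \oplus \cN_{Y/X}$ together with the identification $\cS_Y \cong \cO_Y \otimes \fa$; the same substitution produces the second isomorphism. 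Both identifications are automatically $TN_K$-equivariant, since the splittings of Proposition~\ref{prop:ex} and the decomposition of Theorem~\ref{thm:split}(i) already are.

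For the exact sequence, I would apply to the pair $(Y,\partial Y)$ the general construction underlying (\ref{eqn:st}). Since $Y$ is a nonsingular toric variety whose boundary $\partial Y = Y_1 \cup \cdots \cup Y_m$ is a divisor with normal crossings, the canonical maps $\cT_Y \to \cN_{Y_j/Y} \cong \cO_Y(Y_j)\vert_{Y_j}$ assemble into a short exact sequence of $TN_K$-linearized sheaves
\[
0 \to \cS_Y \to \cT_Y \to \bigoplus_{j=1}^m \cO_Y(Y_j)\vert_{Y_j} \to 0,
\]
where $\cS_Y$ is the logarithmic tangent sheaf of $(Y,\partial Y)$. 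Replacing $\cS_Y$ by $\cO_Y \otimes \fa$ via Proposition~\ref{prop:ex}(ii) yields the stated sequence.

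I do not expect a serious obstacle, as all the ingredients are in place; the only point requiring care is the $TN_K$-equivariance of this last sequence. The torus $T$ acts through its quotient $T/T_K$ and preserves each boundary divisor $Y_j$, whereas $N_K$ acts through $W_{G/K}$ and permutes the $Y_j$ according to the indexing of Subsec.~\ref{subsec:tv}; thus the right-hand direct sum carries the evident $TN_K$-linearization in which $N_K$ permutes the summands. Since every map in the sequence is the canonical one attached to the normal-crossing boundary, compatibility with these linearizations is forced, so the equivariant statement follows from the non-equivariant one.
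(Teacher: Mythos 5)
Your proposal is correct and follows exactly the route the paper intends: the two splittings are immediate from Proposition~\ref{prop:ex} together with the decomposition of $\cN_{Y/X}$ in Theorem~\ref{thm:split}(i), and the exact sequence is the logarithmic tangent sequence of the normal-crossing pair $(Y,\partial Y)$ with $\cS_Y$ identified to $\cO_Y\otimes\fa$ via (\ref{eqn:ltb}). The paper leaves this corollary without an explicit proof precisely because it is this direct assembly, so there is nothing to add.
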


\subsection{Equivariant Chern polynomials}
\label{subsec:ec}
By \cite{EG98}, any $G$-linearized locally free sheaf $\cE$ on $X$
yields \emph{equivariant Chern classes}
$$
c_i^G(\cE) \in A^i_G(X) \quad (i= 0, 1,\ldots, \rk(\cE))
$$
which we may encode by the \emph{equivariant Chern polynomial} 
$$
c_t^G(\cE) := \sum_{i=0}^{\rk(\cE)} c_i^G(\cE) \; t^i.
$$
The map 
$$
r : A^*_G(X) \to A^*_T(Y)^{W_K}
$$ 
of Theorem \ref{thm:eq} sends $c_t^G(\cE)$ to $c_t^T(\cE \vert_Y)$,
by functoriality of Chern classes. 

Together with the decompositions of the restrictions
$\cT_X\vert_Y$ and $\cS_X\vert_Y$ (Corollary \ref{cor:split}),
this yields product formulae for the equivariant Chern polynomials of
the $G$-linearized sheaves $\cT_X$ and $\cS_X$:

\begin{proposition}\label{ecp}
With the above notation, we have equalities in $A^*_T(Y)$:
\begin{equation}
r(c_t^G(\cS_X)) = \prod_{\beta \in \Phi_K \setminus \Phi_L} 
(1 + t \; c_1^T(\cL_{\beta})),
\end{equation}
\begin{equation}
r(c_t^G(\cT_X)) = \prod_{j=1}^m (1 + t \; [Y_j]_T) \times 
\prod_{\beta \in \Phi_K \setminus \Phi_L} (1 + t \; c_1^T(\cL_{\beta})),
\end{equation}
where $c_1^T(\cL_{\beta})\in \Pic_T(Y)$ denotes the equivariant Chern
class of the $T$-linearized invertible sheaf $\cL_{\beta}$, and 
$[Y_j]_T \in \Pic_T(Y)$ denotes the equivariant class of the
boundary divisor $Y_j$.
\end{proposition}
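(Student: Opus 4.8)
The plan is to deduce both product formulae directly from Corollary~\ref{cor:split} together with the fact, recorded just before this proposition, that $r$ sends the $G$-equivariant Chern polynomial $c_t^G(\cE)$ to the $T$-equivariant Chern polynomial $c_t^T(\cE\vert_Y)$. The essential input is the Whitney sum formula for equivariant Chern classes: for a direct sum of $T$-linearized locally free sheaves, the total (equivariant) Chern polynomial is the product of the total Chern polynomials of the summands. This holds in equivariant intersection theory exactly as in the ordinary case, since the equivariant Chern classes are defined via the ordinary Chern classes on the finite-dimensional approximations $Z\times^T U$ of \cite{EG98}.

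First I would treat $\cS_X$. By Corollary~\ref{cor:split} we have an isomorphism of $TN_K$-linearized sheaves
$$
\cS_X\vert_Y \cong (\cO_Y\otimes\fa)\oplus
\bigoplus_{\beta\in\Phi_K\setminus\Phi_L}\cL_\beta.
$$
Applying $r(c_t^G(\cS_X))=c_t^T(\cS_X\vert_Y)$ and the Whitney formula gives
$$
c_t^T(\cS_X\vert_Y)=c_t^T(\cO_Y\otimes\fa)\cdot
\prod_{\beta\in\Phi_K\setminus\Phi_L}c_t^T(\cL_\beta).
$$
Here the key observation is that the trivial sheaf $\cO_Y\otimes\fa$, although of rank $r=\rk(G/K)$, contributes trivially to the \emph{total} Chern polynomial: by Proposition~\ref{prop:ex}(ii) the torus $T_K$ acts trivially on $\fa$, so the $T$-weights occurring in $\fa$ are all zero, whence $c_t^T(\cO_Y\otimes\fa)=1$. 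Since each $\cL_\beta$ is a line bundle, $c_t^T(\cL_\beta)=1+t\,c_1^T(\cL_\beta)$, and the first formula follows at once.

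For $\cT_X$ I would use the other isomorphism of Corollary~\ref{cor:split},
$$
\cT_X\vert_Y\cong\cT_Y\oplus
\bigoplus_{\beta\in\Phi_K\setminus\Phi_L}\cL_\beta,
$$
so that $r(c_t^G(\cT_X))=c_t^T(\cT_Y)\cdot\prod_\beta(1+t\,c_1^T(\cL_\beta))$ by the same argument. It then remains to identify $c_t^T(\cT_Y)$ with $\prod_{j=1}^m(1+t\,[Y_j]_T)$. This is the equivariant version of the standard computation of the (total) Chern class of the tangent sheaf of a nonsingular toric variety, and it is supplied by the exact sequence displayed at the end of Corollary~\ref{cor:split},
$$
0\to\cO_Y\otimes\fa\to\cT_Y\to\bigoplus_{j=1}^m\cO_Y(Y_j)\vert_{Y_j}\to0.
$$
By Whitney again, $c_t^T(\cT_Y)=c_t^T(\cO_Y\otimes\fa)\cdot\prod_{j=1}^m c_t^T(\cO_Y(Y_j)\vert_{Y_j})$; the first factor is $1$ as above, and $c_1^T(\cO_Y(Y_j)\vert_{Y_j})=[Y_j]_T$ by the equivariant self-intersection formula, giving $c_t^T(\cT_Y)=\prod_{j=1}^m(1+t\,[Y_j]_T)$. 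The only point requiring mild care is the vanishing $c_t^T(\cO_Y\otimes\fa)=1$, i.e.\ that the trivializing $\fa$-factors carry zero $T$-weight; once this is granted, both formulae are immediate from Whitney additivity. I do not anticipate a genuine obstacle here, as all the structural work has already been done in Theorem~\ref{thm:split} and Proposition~\ref{prop:ex}; this proposition is essentially a bookkeeping corollary.
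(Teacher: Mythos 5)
Your argument is correct and coincides with the paper's: Proposition \ref{ecp} is presented there as an immediate consequence of the functoriality remark $r(c_t^G(\cE)) = c_t^T(\cE\vert_Y)$ together with the Whitney formula applied to the splittings of Corollary \ref{cor:split} and the standard toric exact sequence, exactly as you have written it out. The one small imprecision is your attribution of the vanishing $c_t^T(\cO_Y\otimes\fa)=1$ to the triviality of the $T_K$-action; what is needed (and what Proposition \ref{prop:ex}(ii) actually gives, since $T$ lies in the kernel of $TN_K \to W_{G/K}$) is that all $T$-weights on $\fa$ vanish, which you do state correctly in the next clause.
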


(Note that the above products are all $W_K$-invariant, but their
linear factors are not.) 

Likewise, we may express the image under $r$ of the 
\emph{equivariant Todd classes} 
$\td^G(\cT_X)$ and $\td^G(\cS_X)$:
$$
r(\td^G(\cS_X)) = \prod_{\beta \in \Phi_K \setminus \Phi_L} 
\frac{c_1^T(\cL_{\beta})}{1 - \exp(-c_1^T(\cL_{\beta}))},
$$
$$
r(\td^G(\cT_X)) = 
\prod_{j=1}^m 
\frac{[Y_j]_T}{1 - \exp(-[Y_j]_T)} \times
\prod_{\beta \in \Phi_K \setminus \Phi_L} 
\frac{c_1^T(\cL_{\beta})}{1 - \exp(-c_1^T(\cL_{\beta}))}.
$$

Finally, we determine the equivariant Chern classes 
$c_1^T(\cL_{\beta}) \in \Pic^T(Y)$
in terms of the boundary divisors of $Y$, indexed as in
Subsec.~\ref{subsec:tv}. Here $\beta \in \Phi_K \setminus \Phi_L$ and
hence $\beta = q(\alpha)$ for a unique 
$\alpha \in \Phi^+_G \setminus \Phi^+_L$.

\begin{proposition}\label{prop:rel}
With the preceding notation, we have
\begin{equation}\label{eqn:big}
c_1^T(\cL_{\beta}) = \alpha + 
\sum_{i,w} \langle \alpha - \theta(\alpha), w \omega_i^{\vee} \rangle 
\; [Y_{i,w}]_T,
\end{equation}
where $\omega_i^{\vee}$ denote the fundamental co-weights of the
restricted root system $\Phi_{G/K}$, and the sum runs over those pairs
$(i,w) \in E(\Phi_{G/K})$ such that 
$w^{-1}(\alpha - \theta(\alpha)) \in \Phi^+_{G/K}$.
\end{proposition}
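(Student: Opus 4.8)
The plan is to read off $c_1^T(\cL_\beta)$ from an explicit $T$-eigensection of $\cL_\beta$, using Theorem \ref{thm:split}(ii). By that result $\cL_\beta$ is globally generated with $\Gamma(Y,\cL_\beta) \cong \fg_\alpha \oplus \fg_{\theta(\alpha)}$ as a $T$-module, and the associated morphism $Y \to \bbP^1$ is $f_{\alpha-\theta(\alpha)}$. Write $s_\alpha$ and $s_{\theta(\alpha)}$ for the eigensections of weights $\alpha$ and $\theta(\alpha)$. I would then invoke the standard principle that, for a $T$-linearized invertible sheaf $\cL$ on $Y$ and a rational $T$-eigensection $s$ of weight $\lambda$, one has $c_1^T(\cL) = \lambda + [\divi(s)]_T$ in $\Pic_T(Y) = A^1_T(Y)$, where $\lambda$ is viewed in $\cX(T) = A^1_T(pt)$ and $[\divi(s)]_T$ is the equivariant class of the $T$-invariant divisor of $s$. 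Applied to $s = s_\alpha$, the weight contribution is exactly the leading term $\alpha$ of (\ref{eqn:big}), and it remains only to identify $[\divi(s_\alpha)]_T$.

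Since $s_\alpha$ and $s_{\theta(\alpha)}$ globally generate $\cL_\beta$ they have no common zeros, so $\divi(s_\alpha)$ and $\divi(s_{\theta(\alpha)})$ are effective divisors with disjoint support. Their difference is the divisor of the rational function $s_\alpha/s_{\theta(\alpha)}$, a $T$-eigenfunction of weight $\gamma := \alpha - \theta(\alpha)$ which restricts to the character $\gamma$ on the open orbit $T/T_K$; by Proposition \ref{prop:prod} this function is $f_\gamma$, where I use the inversion freedom of part (i) to arrange $s_\alpha/s_{\theta(\alpha)} = f_\gamma$ rather than its inverse. Disjointness of supports then forces $\divi(s_\alpha)$ to be the \emph{positive part} of $\divi(f_\gamma)$. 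Feeding in the toric divisor formula (\ref{eqn:div}), namely $\divi(f_\gamma) = \sum_{(i,w) \in E(\Phi_{G/K})} \langle \gamma, w\omega_i^\vee \rangle \, Y_{i,w}$, I obtain $[\divi(s_\alpha)]_T = \sum_{\langle \gamma, w\omega_i^\vee\rangle > 0} \langle \gamma, w\omega_i^\vee\rangle \, [Y_{i,w}]_T$.

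It remains to reconcile the index set with that of (\ref{eqn:big}). Recall that $\langle \gamma, w\omega_i^\vee\rangle$ is the $i$-th coordinate of $w^{-1}\gamma$ in the basis of simple restricted roots; since the coordinates of a root all share one sign, the pairs with $\langle \gamma, w\omega_i^\vee\rangle > 0$ are exactly those with $w^{-1}\gamma \in \Phi^+_{G/K}$ and $i$-th coordinate nonzero, and the pairs with $w^{-1}\gamma \in \Phi^+_{G/K}$ but vanishing coordinate contribute nothing. Thus the sum may be reindexed over $\{(i,w) : w^{-1}(\alpha-\theta(\alpha)) \in \Phi^+_{G/K}\}$, giving precisely (\ref{eqn:big}). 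The step I expect to be delicate is the sign and convention bookkeeping: fixing the identification $A^1_T(pt) = \cX(T)$ and the sign in $c_1^T(\cL) = \lambda + [\divi(s)]_T$, and verifying that the eigenfunction $s_\alpha/s_{\theta(\alpha)}$ is $f_\gamma$ rather than $f_{-\gamma}$, so that $\divi(s_\alpha)$ is the positive and not the negative part. Proposition \ref{prop:prod}(i) together with (\ref{eqn:fxp}) supply exactly the freedom and the multiplicity control needed to pin this down; everything else is a transcription of Theorem \ref{thm:split}(ii) and the toric computation (\ref{eqn:div}).
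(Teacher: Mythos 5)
Your proposal is correct and follows essentially the same route as the paper: take the weight-$\alpha$ eigensection $s$ of $\cL_\beta$ supplied by Theorem \ref{thm:split}(ii), use $c_1^T(\cL_\beta)=\alpha+\divi_T(s)$, and identify $\divi_T(s)$ with the divisor of zeroes of the character $\alpha-\theta(\alpha)$ via (\ref{eqn:div}). Your extra care with the disjointness of the zero loci of the two eigensections and with the reindexing of the positive part of $\divi(f_{\alpha-\theta(\alpha)})$ only fills in details the paper leaves implicit.
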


\begin{proof}
Recall from Theorem \ref{thm:split} that the $T$-module of global
sections of $\cL_\beta$ is isomorphic to 
$\fg_{\alpha} \oplus \fg_{\theta(\alpha)}$. Let $s$ be the section of
$\cL_\beta$ associated with a generator of the line $\fg_\alpha$. Then
$c_1^T(\cL_\beta) = \alpha + \divi_T(s)$, since 
$s$ is a $T$-eigenvector of weight $\alpha$. Moreover, $\divi_T(s)$ is
the divisor of zeroes of the character $\alpha - \theta(\alpha)$. 
Together with (\ref{eqn:div}), this implies the equation (\ref{eqn:big}).
\end{proof}


\begin{thebibliography}{100}

\bibitem[BDP90]{BDP90} E.~Bifet, C.~De Concini and C.~Procesi,
\emph{Cohomology of regular embeddings},
Adv. Math. \textbf{82} (1990), 1--34. 

\bibitem[BB96]{BB96} F.~Bien and M.~Brion,
\emph{Automorphisms and deformations of regular varieties},
Compositio Math. \textbf{104} (1996), 1--26.

\bibitem[Bo81]{Bo81} N.~Bourbaki,
\emph{Groupes et alg\`ebres de Lie. Chapitres 4, 5 et 6},
Masson, Paris, 1981.

\bibitem[Br97]{Br97} M.~Brion,
\emph{Equivariant Chow groups for torus actions},
Transformation Groups \textbf{2} (1997), 225--267.

\bibitem[Br98]{Br98} M.~Brion,
\emph{The behaviour at infinity of the Bruhat decomposition}, 
Comment. Math. Helv. \textbf{73} (1998), 137--174.

\bibitem[BP02]{BP02} M.~Brion and E.~Peyre,
\emph{The virtual Poincar\'e polynomials of homogeneous spaces},
Compositio Math. \textbf{134} (2002), 319--335.

\bibitem[Br04]{Br04} M.~Brion,
\emph{Construction of equivariant vector bundles},
preprint, arXiv: math.AG/0410039; to appear in the Proceedings of the
International Conference on Algebraic Groups (Mumbai, 2004).



\bibitem[DP83]{DP83} C.~De Concini and C.~Procesi,
\emph{Complete symmetric varieties I}, pp.~1--44 in:
Lecture Note in Math. \textbf{996}, Springer--Verlag, New York, 1983.

\bibitem[DP85]{DP85} C.~De Concini and C.~Procesi,
\emph{Complete symmetric varieties II. Intersection theory}, 
pp.~481--513 in:
Advanced Studies in Pure Math. \textbf{6}, North Holland, Amsterdam,
1985.

\bibitem[DL94]{DL94} I.~Dolgachev and V.~Lunts,
\emph{A character formula for the representation of a Weyl group in
the cohomology of the associated toric variety},  
J. Algebra {\bf 168} (1994), 741--772. 

\bibitem[DS99]{DS99} C.~De Concini and T.~A.~Springer,
\emph{Compactification of symmetric varieties},
Transformation Groups \textbf{4} (1999), 273--300.

\bibitem[EG98]{EG98} D.~Edidin and W.~Graham,
\emph{Equivariant intersection theory},
Invent. math. {\bf 131} (1998), 595--644.

\bibitem[GKM99]{GKM99} M.~Goresky, R.~Kottwitz and R.~MacPherson,
\emph{Equivariant cohomology, Koszul duality, and the localization
theorem},  Invent. Math. \textbf{131} (1999), 25--83.

\bibitem[Ha77]{Ha77} R.~Hartshorne,
\emph{Algebraic Geometry}, Grad. Text Math. \textbf{52},
Springer--Verlag, New York, 1977.

\bibitem[Ki06]{Ki06} V.~Kiritchenko,
\emph{Chern classes of reductive groups and an adjunction formula},
Ann. Inst. Fourier (Grenoble) \textbf{56} (2006), 1225--1256.

\bibitem[LP90]{LP90} P.~Littelmann and C.~Procesi,
\emph{Equivariant cohomology of wonderful compactifications}
in: Operator Algebras, Unitary Representations, Enveloping
Algebras, and Invariant Theory, 
Progress in Mathematics \textbf{92}, Birkh\"auser, 1990.

\bibitem[Od88]{Od88} T.~Oda,
\emph{Convex Bodies and Algebraic Geometry},
Ergebnisse der Math. \textbf{15}, Springer-Verlag, 1988.

\bibitem[Pr90]{Pr90} C.~Procesi,
\emph{The toric variety associated to Weyl chambers},
Mots, 153--161, 
Lang. Raison. Calc., Herm\`es, Paris, 1990.

\bibitem[Re04]{Re04} N.~Ressayre,
\emph{Spherical homogeneous spaces of minimal rank},
preprint (2004), available at
www.math.uni-montp2.fr/\~{}ressayre/spherangmin.pdf

\bibitem[Ri82]{Ri82} R.~W.~Richardson, 
\emph{Orbits, invariants and representations associated to involutions
of reductive groups}, Invent. math. {\bf 66} (1982), 287--312.

\bibitem[Sp85]{Sp85} T.~A.~Springer,
\emph{Some results on algebraic groups with involutions}, 
525--543, Adv. Stud. Pure Math. \textbf{6}, North Holland, Amsterdam,
1985.

\bibitem[Sp98]{Sp98} T.~A.~Springer,
\emph{Linear Algebraic Groups}, second edition, 
Progress in Mathematics \textbf{9}, Birkh\"auser,
1998.

\bibitem[St06]{St06} E.~Strickland,
\emph{Equivariant cohomology of the wonderful group compactification}, 
J. Algebra \textbf{306} (2006), 610--621.

\bibitem[Tc05]{Tc05} A.~Tchoudjem,
\emph{Cohomologie des fibr\'es en droites sur les vari\'et\'es
magnifiques de rang minimal}, preprint, arXiv: math.AG/0507581.

\bibitem[Um05]{Um05} V.~Uma,
\emph{Equivariant $K$-theory of compactifications of algebraic groups},
preprint, arXiv: math.AG/0512187; to appear in Transformation Groups.

\end{thebibliography}
\end{document}